\documentclass[pdflatex,sn-basic,Numbered]{sn-jnl}

\usepackage{bbold}
\usepackage{graphicx}%
\usepackage{multirow}%
\usepackage{amsmath,amssymb,amsfonts}%
\usepackage{amsthm}%
\usepackage[title]{appendix}%
\usepackage{xcolor}%
\usepackage{textcomp}%
\usepackage{manyfoot}%
\usepackage{booktabs}%
\usepackage{algorithm}%
\usepackage{algorithmicx}%
\usepackage{algpseudocode}%
\usepackage{listings}%
\usepackage{mathtools}
\usepackage{enumitem}
\usepackage{lineno}

\newcommand{\wdp}{\ensuremath{w_p}}
\newcommand{\owp}{\ensuremath{\theta_p}}
\newcommand{\owpn}{\ensuremath{\theta_{p,n}}}

\newcommand{\rdowp}{\ensuremath{d^A_{p,n}}}
\newcommand{\dowpq}{\ensuremath{\rho_{p,q}}}
\newcommand{\real}{\ensuremath{\mathbb{R}}}

\newcommand{\borel}[1]{\ensuremath{P(#1)}}
\newcommand{\wasp}[1]{\ensuremath{W_p(#1)}}

\newcommand{\obs}[1]{\ensuremath{\Lambda(#1)}}

\newcommand{\otr}[1]{\ensuremath{T_{#1}}}

\newcommand{\empiric}[1]{\ensuremath{E(#1)}}
\newcommand{\etr}[1]{\ensuremath{D_#1}}
\newcommand{\atr}[1]{\ensuremath{T^n_{#1,A}}}
\newcommand{\mdim}[1]{\ensuremath{\xi_{#1}}}

\theoremstyle{thmstyleone}%
\newtheorem{theorem}{Theorem}
\newtheorem{proposition}[theorem]{Proposition}%
\newtheorem{lemma}[theorem]{Lemma}%
\newtheorem{corollary}[theorem]{Corollary}%

\theoremstyle{thmstyletwo}%
\newtheorem{example}{Example}%
\newtheorem{remark}{Remark}%

\theoremstyle{thmstylethree}%
\newtheorem{definition}{Definition}%

\begin{document}

\title[The Observable Wasserstein Distance]{The Observable Wasserstein Distance}


\author*[1]{\fnm{Edivaldo} \sur{Lopes dos Santos}}\email{edivaldo@ufscar.br}

\author[1]{\fnm{Leandro} \sur{Vicente Mauri}}\email{leandro.mauri@ufscar.br}

\author[2]{\fnm{Washington} \sur{Mio}}\email{wmio@fsu.edu}

\author[2]{\fnm{Tom} \sur{Needham}}\email{tneedham@fsu.edu}

\affil[1]{\orgdiv{Departmento de Matemática}, \orgname{Universidade Federal de São Carlos}, \country{Brazil}}

\affil[2]{\orgdiv{Department of Mathematics}, \orgname{Florida State University}, \country{USA}}


\abstract{We introduce the observable Wasserstein distance, a framework for deriving lower bounds on the Wasserstein distance between probability measures on Polish metric spaces, designed to bypass the computational intractability of exact optimal transport in large-scale, non-Euclidean datasets. Analogous to the sliced Wasserstein distance in $\mathbb{R}^d$, our approach projects measures onto the real line via 1-Lipschitz observables and computes the Wasserstein distances between the resulting pushforward distributions. We define a hierarchy of pseudo-metrics by restricting observables to a nested chain of subspaces. A central theoretical contribution is an injectivity result linking the metric covering dimension of the support of a measure to the specific order in the hierarchy that guarantees unique recovery. This serves as a metric-space analogue to the Cram\'{e}r-Wold Device for Euclidean distributions. We demonstrate that this hierarchy offers a tunable trade-off between sharpness as a lower bound on the Wasserstein distance and computational efficiency. We also present a discrete computational model for finite grids and numerical experiments validating the efficacy and utility of these approximations.}

\keywords{Optimal Transport, Observable Wasserstein Distance, Sliced Wasserstein Distance}


\pacs[MSC Classification]{49Q22, 65D18}

\maketitle

\section{Introduction}

The primary objective of this paper is to develop the concept of the {\em observable Wasserstein distance} for probability measures on metric spaces, alongside a framework for computing the distance between data clouds within such domains. This approach follows a principle analogous to the sliced Wasserstein distance for probability distributions on Euclidean spaces \cite{rabin2011wasserstein}. Many datasets of interest consist of non-vector objects, such as 3D shapes, graphs, protein structures, and functional surfaces, which reside in metric spaces. Approaches like the sliced Wasserstein distance become insufficient in these contexts, and the observable Wasserstein distance provides a pathway to a computationally tractable alternative for deriving lower bounds on the exact Wasserstein distance.

The theoretical foundations of optimal transport trace back to Monge's 1781 formulation \cite{monge1781} and Kantorovich's seminal relaxation in the 1940s \cite{kantorovitch1958}. However, practical large-scale computation remained elusive until relatively recently. The computational landscape has experienced a transformative shift with the emergence of the entropic regularization paradigm of the Sinkhorn algorithm \cite{cuturi2013sinkhorn}, low-rank approximation methods \cite{lin2019efficient,scetbon2021lowrank}, and the sliced Wasserstein distance, which are pivotal mechanisms in large-scale data regimes. Although the Sinkhorn algorithm successfully reduced the historical cubic complexity of exact optimal transport, it remains bounded by quadratic computational demands relative to the number of data points, limiting its efficacy in massive datasets and motivating low-rank techniques. The sliced Wasserstein approach circumvents some of these bottlenecks by projecting Euclidean distributions onto one-dimensional subspaces where the transport problem admits an exact, closed-form solution via simple sorting, thereby achieving near-linear computational complexity. However, a critical determinant of efficiency is the number of projections required to ensure approximation accuracy, which can grow rapidly with the dimension of the ambient space. Despite this trade-off, the method is well-suited for applications in relatively low dimensions, in part because the computational workload is embarrassingly parallelizable. In particular, the sliced Wasserstein distance is effective for computations involving large point clouds in 3D space, where its ability to balance geometric fidelity with computational tractability offers a distinct advantage. The fact that the sliced Wasserstein distance is a metric, not just a pseudo-metric, is guaranteed by the Cram\'{e}r-Wold Device that states that a probability measure on $\real^d$ is uniquely determined by its projections onto 1-dimensional subspaces \cite{cramer1936}.

In a metric space $(X,d)$, where linearity, inner products, and norms are absent, our approach projects a probability distribution $\mu$ onto the real line using metric observables. These are  defined as 1-Lipschitz functions $f \colon X \to \real$, which act as non-expansive scalar fields: 
\begin{equation}
|f(x)-f(y)| \leq d(x,y),
\end{equation}
for any $x,y \in X$. The guiding principle is that, although the projected measure $f_\sharp \mu$ (for each observable $f$) only weakly retains information about the shape of $\mu$, the collection of these projections fully characterizes the probability measure in the aggregate. (The projection $f_\sharp \mu$ is defined by $f_\sharp \mu (A) = \mu (f^{-1} (A))$, for any Borel set $A \subseteq \real$.) Let $\Lambda (X)$ be the set of all metric observables $f \colon X \to \real$ endowed with the topology of uniform convergence on compact sets. Under suitable assumptions on the moments of the probability measures, the observable Wasserstein $p$-distance, $p\geq 1$, is defined as
\begin{equation} \label{E:owp}
\owp (\mu,\nu) \coloneqq \sup_{f \in \Lambda (X)} w_p (f_\sharp \mu, f_\sharp \nu),   
\end{equation}
where $w_p$ denotes the Wasserstein $p$-distance (see Definition \ref{D:wass}). Since observables are 1-Lipschitz functions, it readily follows that $\owp (\mu,\nu) \leq w_p (\mu, \nu)$. However, estimating the supremum in \eqref{E:owp} over all observables can be computationally expensive. This motivates the introduction of various subspaces of $\Lambda(X)$ that exhibit a strong connection with the shape of distributions on $X$. We begin with a subspace $\Lambda_\infty(X) \subseteq \Lambda(X)$ that suffices to determine any probability measure $\mu$ via its projections $f_\sharp \mu$ for $f \in \Lambda_\infty(X)$. The basic observables in $\Lambda_\infty(X)$ are the distance-to-a-point functions $f_a \colon X \to \mathbb{R}$ given by $f_a(x) = d(x,a)$, $a \in X$. For any $r > 0$, let $B(a,r)$ denote the open ball of radius $r$ centered at $a \in X$. Then,
\begin{equation} \label{E:mball}
{f_a}_\sharp \mu ([0,r)) = \mu (f^{-1}_a ([0,r))) =\mu (B(a,r)).
\end{equation}
Therefore, from the projections ${f_a}_\sharp \mu$ we can recover the measure of all open balls in $X$. The remaining elements $f \in \Lambda_\infty (X)$ are constructed from finite (weighted) wedge products of observables of the basic $f_a$ type so as to ensure that we can recover the measure of arbitrary finite unions of open balls from the projections $f_\sharp \mu$. (The wedge product of $f$ and $g$ is defined as $(f\wedge g) (x) = \min \{f(x), g(x)\}$.) This implies that the projections of any probability measure $\mu$ via the observables in $\Lambda_\infty (X)$ uniquely characterize $\mu$ so long as $(X,d)$ is a Polish metric space, an assumption that we make throughout. This is proven in Theorem \ref{T:injective}.

The construction of $\Lambda_\infty(X)$ naturally gives rise to an ascending chain of subspaces $\Lambda_n(X)$ $ \subseteq \Lambda_\infty(X)$, consisting of observables expressible as weighted wedge products of at most $n+1$ functions of the form $f_a$ (with $n \geq 0$). A question arises: does there exist a corresponding chain of subspaces of probability measures on $X$ that are fully characterized by their projections onto the real line via observables in $\Lambda_n(X)$? To address this, we introduce the notion of the \emph{metric covering dimension} of the support of a measure $\mu$. We answer the question in the affirmative, proving that for any $n \geq 0$, a probability measure $\mu$ whose support has dimension $\leq n$ is uniquely determined by its projections $f_\sharp \mu$ for $f \in \Lambda_n(X)$. In particular, if $X$ itself has metric covering dimension $\leq n$, then {\em any} probability measure on $X$ is recoverable from these projections. This injectivity result is established in Section \ref{S:ndim} and serves as a metric-space counterpart to the Cram\'{e}r--Wold Theorem for probability measures on Euclidean spaces.

For $n \geq 0$, by restricting the space of observables to $\Lambda_n(X)$, we define the pseudo-metrics
\begin{equation}
\theta_{p,n}(\mu, \nu) \coloneqq \sup_{f \in \Lambda_n(X)} w_p(f_\sharp \mu, f_\sharp \nu).
\end{equation}
The injectivity results discussed above imply that $\theta_{p,n}$ becomes a genuine metric when restricted to probability measures whose supports have metric covering dimension $\leq n$. Since $\Lambda_n(X) \subseteq \Lambda_m(X)$ for $n \leq m$, the following monotonicity holds:
\begin{equation}
\theta_{p,n}(\mu, \nu) \leq \theta_{p,m}(\mu, \nu) \leq w_p(\mu, \nu).
\end{equation}
As anticipated, there is a trade-off between the sharpness of $\theta_{p,n}(\mu, \nu)$ as a lower bound on the Wasserstein distance $w_p(\mu, \nu)$ and the computational efficiency gained by working with a lower-order space of observables. 

The connection between 1-Lipschitz observables and probability measures on metric spaces traces its roots to the celebrated Kantorovich-Rubinstein duality theorem for the Wasserstein 1-distance \cite{kantorovitch1958} (see also \cite{dudley2002}). This duality expresses $w_1$ in terms of the supremum of differences in expected values over all 1-Lipschitz functions, implying that the collection of such expectations completely characterizes a probability measure. Later, bounded 1-Lipschitz observables were employed to metrize weak convergence of probability measures on separable metric spaces using expected values \cite{dudley1966weak}. Our formulation of injectivity diverges from these expectation-based approaches by relying on the full pushforward measures $f_\sharp \mu$ rather than merely their means. While this may initially appear to be overly stringent, it proves essential. As illustrated by the injectivity argument for $\Lambda_\infty(X)$ outlined above, retaining the full distribution of observables allows us to identify an entire chain of metric observables that more explicitly capture the underlying geometry of the distributions. This distinction is not simply theoretical; it is critical for estimating observable Wasserstein distances in practice.

As noted above, the proposed framework is computationally efficient. Furthermore, empirical estimation of $\theta_{p,n}$ is straightforward to implement and inherently parallelizable. Our numerical experiments in Section~\ref{S:numexp} provide empirical evidence of the trade-off between computational efficiency and approximation accuracy. In practice, we find that a small number of observables suffices for the observable Wasserstein framework to outperform standard methods in various classification tasks. Additionally, we provide a proof-of-concept demonstration of how the observable Wasserstein distance can be integrated into deep learning pipelines.

\textbf{Organization} 
Section \ref{S:lip} introduces the Lipschitz transform $T_\mu$, which maps an observable $f$ to the pushforward measure $f_\sharp \mu$. It also defines the nested chain of observables $\Lambda_n (X)$ and establishes the injectivity result for $\Lambda_\infty (X)$. Section \ref{S:ndim} introduces the notion of metric covering dimension and demonstrates that the restriction of the Lipschitz transform to $\Lambda_n (X)$ uniquely identifies any probability measure whose support has metric covering dimension $\leq n$. In Section \ref{S:odistances}, we define and study the properties of the hierarchy of observable Wasserstein distances. 
Finally, Section \ref{S:numexp} presents numerical experiments, and Section \ref{S:discussion} concludes the paper with a summary and further discussion.


\section{The Lipschitz Transform} \label{S:lip}

Throughout the paper we assume that $(X,d)$ is a Polish (complete and separable) metric space and denote by $\borel{X}$ the collection of all Borel probability measures on $X$. If $f \colon X \to Y$ is a (Borel measurable) map between metric spaces and $\mu \in \borel{X}$, the pushforward $f_\sharp \mu \in \borel{Y}$ is given by $f_\sharp \mu (U):= \mu(f^{-1}(U))$, 
for any Borel measurable set $U \subseteq Y$. For $p \geq 1$, let $\wasp{X} \subseteq \borel{X}$ be the subset comprising all probability measures with finite $p$-moments; that is, 
\begin{equation}
\int_X d^p(x,x_0) d\mu (x) < \infty,
\end{equation}
for some (and thus all) $x_0 \in X$. We equip $\wasp{X}$ with the Wasserstein $p$-distance $\wdp$, which is defined as follows. Given $\mu, \nu \in \borel{X}$, a {\em coupling} between $\mu$ and $\nu$ is a probability measure $h \in \borel{X \times X}$ with the property that its marginals are $\mu$ and $\nu$. More precisely, if $\pi_1, \pi_2 \colon X \times X \to X$ denote the projections onto the first and second components, respectively, then ${\pi_1}_\sharp h = \mu$ and ${\pi_2}_\sharp h = \nu$. The collection of  couplings is denoted $\Gamma(\mu,\nu)$. 
\begin{definition}[cf.\,\cite{villani2009}] \label{D:wass}
Let $\mu,\nu \in \wasp{X}$ and $p \geq 1$. The {\em Wasserstein $p$-distance} between $\mu$ and $\nu$ is given by
\[
w_p (\mu,\nu) := \inf_{h \in \Gamma (\mu,\nu)} \Big(\int_{X \times X} d^p(x,y) dh(x,y) \Big)^{1/p}.
\]
\end{definition}
We refer to the metric space $(\wasp{X}, w_p)$ as the Wasserstein $p$-space. Jensen's inequality implies that $W_q (X) \subseteq \wasp{X}$ and $w_p (\mu,\nu) \leq w_q(\mu,\nu)$, for any $1 \leq p \leq q$. 

\smallskip

A 1-Lipschitz function $f \colon X \to \real$ is a non-expansive function; that is, $|f(x)-f(y)| \leq d(x,y)$, $\forall x,y \in X$. We refer to a 1-Lipschitz function as a {\em metric observable} (or simply an {\em observable}) and denote the set of all metric observables by $\obs{X}$.

\begin{example}  \label{E:dist}
For any $a \in X$, the distance-to-$a$ function $f_a \colon X \to \real$ given by $f_a (x) = d(x,a)$ is 1-Lipschitz, a fact that follows from the triangle inequality. The inclusion $\imath_X \colon X \hookrightarrow \obs{X}$ given by $\imath_X (a) = f_a$ lets us identify $X$ with a subspace of $\obs{X}$ through distance functions. Although the observables $f_a$ are not necessarily bounded functions (as $X$ can have infinite diameter), we have
\begin{equation}
\sup_{x \in X} |f_a (x)-f_b (x)| = d(a,b) \quad \forall a,b \in X.
\end{equation}

\end{example}

We equip $\obs{X}$ with the topology of uniform convergence on compact sets. If $X$ is compact, this topology is metrized by the $\|\cdot\|_\infty$ norm and $(\obs{X}, \|\cdot\|_\infty)$ is compact by the Arzel\`{a}-Ascoli Theorem (cf. \cite{dudley2002}).


\begin{definition}
Let $\mu \in \borel{X}$. The {\em Lipschitz transform} $\otr{\mu} \colon \obs{X} \to \borel{\real}$ is defined by
\[
f \mapsto \otr{\mu} (f) \coloneqq f_\sharp \mu,
\]
the pushforward of $\mu$ to $\real$ under the observable $f$. 
\end{definition}

If $\mu \in \wasp{X} \subseteq \borel{X}$, then $\otr{\mu} (f) \in \wasp{\real}$, $\forall f \in \obs{X}$. Indeed, let $x_0 \in X$ and $t_0=f(x_0) \in \real$. Then,
\begin{equation}
\int_\real |t-t_0|^p d f_\sharp \mu(t) = \int_X |f(x)-f(x_0)|^p d\mu (x) \leq \int_X d^p(x,x_0) d\mu (x) < \infty.
\end{equation}
We abuse notation and also denote by $\otr{\mu} \colon \obs{X} \to \wasp{\real}$ the Lipschitz transform viewed as having co-domain $\wasp{\real}$.

\subsection{Continuity}

We now establish continuity of the Lipschitz transform.

\begin{proposition} \label{P:liptr}
For any $\mu \in \wasp{X}$, $p \geq 1$, the Lipschitz transform $\otr{\mu} \colon \obs{X} \to \wasp{\real}$ is continuous. Moreover, if $X$ is compact, then $\otr{\mu} \colon (\obs{X}, \|\cdot\|_\infty) \to (\wasp{\real},w_p)$ is 1-Lipschitz.
\end{proposition}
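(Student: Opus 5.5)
The basic tool is the coupling induced by $\mu$ itself. For $f,g \in \obs{X}$, consider the map $(f,g)\colon X \to \real\times\real$ and the pushforward $h = (f,g)_\sharp \mu$. Its marginals are $f_\sharp\mu$ and $g_\sharp\mu$, so $h \in \Gamma(f_\sharp\mu, g_\sharp\mu)$, and by Definition \ref{D:wass}
\[
w_p^p(f_\sharp\mu, g_\sharp\mu) \le \int_{\real\times\real} |s-t|^p\, dh(s,t) = \int_X |f(x)-g(x)|^p\, d\mu(x).
\]
Everything then reduces to controlling $\int_X |f-g|^p\,d\mu$.

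In the compact case this is immediate. The integrand is bounded by $\|f-g\|_\infty^p$ and $\mu$ is a probability measure, so $w_p(\otr{\mu}(f),\otr{\mu}(g)) \le \|f-g\|_\infty$. Hence $\otr{\mu}$ is 1-Lipschitz.

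For the general Polish case, fix $f$ and a net (or sequence) $g_\alpha \to f$ uniformly on compact sets; I must show that $\int_X |g_\alpha - f|^p\,d\mu \to 0$. Given $\varepsilon>0$, I will use tightness of $\mu$, which holds because $X$ is Polish, together with the finite $p$-moment, to choose a compact $K$ such that
\[
\int_{X\setminus K} (1+d(x,x_0))^p\, d\mu(x) < \varepsilon .
\]
Such a $K$ exists by dominated convergence, since $\mu(X\setminus K)$ can be made arbitrarily small and $(1+d(\cdot,x_0))^p$ is integrable.

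I then split the integral into the parts over $K$ and over $X\setminus K$.

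On $K$, the integral is at most $\sup_K|g_\alpha-f|^p$, which tends to $0$ by the choice of topology.

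Off $K$, I use the 1-Lipschitz property to get a pointwise bound: $|g_\alpha(x)-f(x)| \le |g_\alpha(x_0)-f(x_0)| + 2d(x,x_0)$. Taking $x_0\in K$, eventually $|g_\alpha(x_0)-f(x_0)|\le 1$. Thus the integrand is bounded by $2^p(1+d(x,x_0))^p$, and the tail integral is at most $2^p\varepsilon$.

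Combining the two parts gives $\limsup_\alpha w_p^p(\otr{\mu}(g_\alpha),\otr{\mu}(f)) \le 2^p\varepsilon$. Since $\varepsilon$ is arbitrary, $\otr{\mu}$ is continuous.

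The main obstacle is this tail control in the noncompact case. Uniform convergence on compacts gives no bound far out, so the argument relies on the uniform Lipschitz bound, which yields growth at most linear in $d(x,x_0)$, combined with the finite $p$-moment. One technical point is that the topology on $\obs{X}$ may not be first countable, so I will phrase the argument with nets; the estimate above is insensitive to this. I will also check that the compact $K$ can be chosen to contain $x_0$, which is harmless since I can enlarge $K$ by adding the single point $x_0$.
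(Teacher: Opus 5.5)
Your proposal is correct and follows essentially the same route as the paper: the coupling $(f,g)_\sharp\mu$ reduces everything to $\int_X |f-g|^p\,d\mu$, which is at most $\|f-g\|_\infty^p$ in the compact case. In general you split this integral over a compact $K$ chosen via tightness and the finite $p$-moment and over its complement, where the 1-Lipschitz property gives growth linear in $d(x,x_0)$. Two minor differences: your pointwise domination by $2^p(1+d(x,x_0))^p$ is a slightly tidier substitute for the three-term Minkowski estimate in Lemma~\ref{L:offk}, and phrasing the argument with nets avoids relying on the first countability of $\obs{X}$, which is true but left unstated in the paper's sequential argument.
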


The proof uses the following lemma.

\begin{lemma} \label{L:offk}
Let $\mu \in \wasp{X}$ and $f_n, f \in \obs{X}$, $n \geq 1$. Given $\epsilon >0$, if there is a point $x_0 \in X$ for which $f_n (x_0) \to f(x_0)$, as $n \to \infty$, then there exists a compact set $K \subseteq X$ and $n_0 \in \mathbb{N}$ such that, for any $n \geq n_0$, 
\[
\int_{K^c} |f_n(x) - f(x)|^p d\mu(x) < \epsilon^p.
\]
\end{lemma}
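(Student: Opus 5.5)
The plan is to bound the integrand pointwise by a fixed $\mu$-integrable function built from the $p$-moment of $\mu$. Then we use tightness of $\mu$ on the Polish space $X$ to choose a compact $K$ whose complement carries arbitrarily small mass with respect to that function.

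First, since $f_n(x_0)\to f(x_0)$, there are $n_0$ and a constant $C$ with $|f_n(x_0)-f(x_0)|\le C$ for all $n\ge n_0$. For instance, take $n_0$ with $|f_n(x_0)-f(x_0)|\le 1$ and $C=1$. Because $f_n$ and $f$ are 1-Lipschitz, the triangle inequality gives, for every $x\in X$ and $n\ge n_0$,
\[
|f_n(x)-f(x)|\le |f_n(x)-f_n(x_0)|+|f_n(x_0)-f(x_0)|+|f(x_0)-f(x)|\le 2d(x,x_0)+1 .
\]
Hence $|f_n(x)-f(x)|^p\le g(x)$, where
\[
g(x):=\bigl(2d(x,x_0)+1\bigr)^p\le 2^{p-1}\bigl(2^p d^p(x,x_0)+1\bigr)
\]
by convexity of $t\mapsto t^p$. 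Since $\mu\in\wasp{X}$, $g$ is $\mu$-integrable, and this bound is uniform in $n\ge n_0$.

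Second, the finite measure $g\,d\mu$ is absolutely continuous with respect to $\mu$, so by absolute continuity of the integral there is $\delta>0$ such that $\int_A g\,d\mu<\epsilon^p$ whenever $\mu(A)<\delta$. Since $X$ is Polish, $\mu$ is tight (Ulam's theorem), so there is a compact $K\subseteq X$ with $\mu(K^c)<\delta$. Therefore, for all $n\ge n_0$,
\[
\int_{K^c}|f_n-f|^p\,d\mu\le\int_{K^c}g\,d\mu<\epsilon^p .
\]
Alternatively, one can avoid $\delta$: take compacts $K_j$ with $\mu(K_j^c)\to 0$ and apply dominated convergence to $g\,\mathbb{1}_{K_j^c}$.

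The only nontrivial ingredient is the uniform domination. This is exactly where the hypothesis of convergence at a single point is needed: without it, the constants $f_n(x_0)-f(x_0)$ could drift to infinity. Once domination holds, the result reduces to tightness together with integrability of the $p$-th moment.
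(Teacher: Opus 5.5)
Your proof is correct and is essentially the paper's argument: both bound $|f_n-f|$ through the anchor point $x_0$ using the 1-Lipschitz property, and both use tightness of $\mu$ to make the tail $\int_{K^c} d^p(x,x_0)\,d\mu$ small. The difference is only in packaging. The paper applies Minkowski's inequality in $L^p(K^c)$, chooses $n$ with $|f_n(x_0)-f(x_0)|<\epsilon/3$, and controls the moment tail by monotone convergence over nested compacts (essentially your alternative remark), whereas you dominate the integrand by a single integrable $g$ and use absolute continuity of the integral, with the small bonus that your $n_0$ does not depend on $\epsilon$ and only boundedness of $f_n(x_0)-f(x_0)$ is used.
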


\begin{proof}
Since $X$ is Polish, $\mu$ is a tight measure \cite{dudley2002}, which means that for each $\delta>0$, there exists a compact set $L \subseteq X$ satisfying $\mu (L^c) < \delta$. This implies that there is a compact set $K \subseteq X$ such that 
\begin{equation} \label{E:moment}
\int_{K^c} d^p (x,x_0) d\mu(x) < (\epsilon/3)^p.
\end{equation}
Indeed, for each $m \geq 1$, let $A_m \subseteq X$ be a compact set satisfying $\mu (A^c_m) < 1/m$. The finite unions $K_m = \bigcup_{i=1}^m A_m$ form a chain $K_1 \subseteq \cdots \subseteq K_m \subseteq \cdots$ of compact sets with $\mu(K_m) \geq \mu(A_m) >1-1/m$. The measurable set $K_\infty = \bigcup_{i\geq 1} K_i$ satisfies $\mu (K_\infty) \geq \mu(K_m) > 1 - 1/m$, for every $m \geq 1$, so that $\mu(K_\infty)=1$. By the monotone convergence theorem,
\begin{equation}
\begin{split}
\int_{K_m} d^p (x,x_0) d\mu(x) &= \int_X d^p (x,x_0) \mathbb{1}_{\!K_m} (x) d\mu(x) \longrightarrow
\int_X d^p (x,x_0) \mathbb{1}_{\!K_\infty} (x) d\mu(x) \\
&= \int_X d^p (x,x_0) d\mu(x),
\end{split}
\end{equation}
as $m \to \infty$. Here, $\mathbb{1}_{\!K_m}$ denotes the characteristic function of $K_m$. Hence, \eqref{E:moment} holds for $K=K_m$, provided that $m$ is sufficiently large. Take $n$ large enough so that $|f_n(x_0)-f(x_0)| < \epsilon/3$. Then, \eqref{E:moment} and the Minkowski inequality imply that
\begin{equation}
\begin{split}
\Big(&\int_{K^c} |f_n(x)-f(x)|^p d\mu (x) \Big)^{1/p} \leq \Big(\int_{K^c} |f_n(x)-f_n(x_0)|^p d\mu (x) \Big)^{1/p} +\\
&+ \Big(\int_{K^c} |f_n(x_0)-f(x_0)|^p d\mu (x) \Big)^{1/p} + \Big(\int_{K^c} |f(x)-f(x_0)|^p d\mu (x) \Big)^{1/p} \\
&\leq 2 \Big(\int_{K^c} d^p(x,x_0) d\mu (x) \Big)^{1/p} + \Big(\int_{K^c} |f_n(x_0)-f(x_0)|^p d\mu (x) \Big)^{1/p}
< \epsilon.
\end{split}
\end{equation}
\end{proof}

\begin{proof}[Proof of Proposition \ref{P:liptr}]

Given $f,g \in \obs{X}$, let $\phi \colon X \to \real \times \real$ be the map $\phi(x) = (f(x),g(x))$ and define $h_\phi \coloneqq \phi_\sharp \mu$. The marginals of $h_\phi$ are ${\pi_1}_\sharp h_\phi = f_\sharp \mu$ and ${\pi_2}_\sharp h_\phi = g_\sharp \mu$ so that $h_\phi$ is a coupling between $\otr{\mu}(f)$ and $\otr{\mu}(g)$ that satisfies
\begin{equation} \label{E:coupling}
\begin{split}
w_p (f_\sharp \mu, g_\sharp \mu) &= \inf_{h \in \Gamma(f_\sharp (\mu), g_\sharp (\mu))} \Big(\int_{\real \times \real} |s-t|^p dh (s,t) \Big)^{1/p} \\
&\leq \Big(\int_{\real \times \real} |s-t|^p dh_\phi (s,t) \Big)^{1/p} = 
\Big(\int_X |f(x)-g(x)|^p d\mu (x)\Big)^{1/p}.
\end{split}
\end{equation}
Thus, if $X$ is compact, we have that $w_p (f_\sharp \mu, g_\sharp \mu) \leq \|f-g\|_\infty$, 
showing that $\otr{\mu}$ is 1-Lipschitz. Now we verify the continuity of $\otr{\mu}$ for any Polish space $(X,d)$. Let $f, f_n \in \obs{X}$, $n \geq 1$, be such that $f_n \to f$ in the topology of uniform convergence on compact sets. Clearly, for any $x_0 \in X$, we have pointwise convergence $f_n (x_0) \to f(x_0)$. Hence, given $\epsilon>0$, Lemma \ref{L:offk} ensures the existence of a compact set $K \subseteq X$ and $n_0 \in \mathbb{N}$ such that
\begin{equation} \label{E:offk}
\int_{K^c} |f_n(x) - f(x)|^p d\mu(x) < \frac{\epsilon^p}{2},
\end{equation}
for every $n \geq n_0$, where $K^c$ denotes the complement of $K$. From \eqref{E:coupling} and \eqref{E:offk}, we obtain
\begin{equation}
\begin{split}
w_p^p ({f_n}_\sharp \mu, f_\sharp \mu) &\leq \int_X |f_n(x)-f(x)|^p d\mu (x) \\
&= \int_K |f_n(x)-f(x)|^p d\mu (x) + \int_{K^c} |f_n(x)-f(x)|^p d\mu (x) \\
&< \int_K |f_n(x)-f(x)|^p d\mu (x) + \frac{\epsilon^p}{2},
\end{split}
\end{equation}
for $n \geq n_0$. Uniform convergence on compact sets guarantees that, taking $n$ sufficiently large, we have $|f_n(x)-f(x)| < \epsilon/ \sqrt[p]{2}$, for any $x \in K$. Therefore, $w_p ({f_n}_\sharp \mu, f_\sharp \mu) < \epsilon$.
This proves the continuity of the Lipschitz transform.
\end{proof}


\subsection{Identifiability from the Lipschitz Transform} \label{S:injective}

We show that a probability measure $\mu$ can be fully recovered from its Lipschitz transform $\otr{\mu}$. More precisely, the mapping $\mu \mapsto \otr{\mu}$ is injective. As a matter of fact, we prove that to determine $\mu$ it suffices to know the transform on a subset $\Lambda_\infty (X) \subseteq \obs{X}$ of observables that we describe next.

Given $f,g \colon X \to \real$, let $f\wedge g \colon X \to \real$ denote the minimum of $f$ and $g$; that is, the function defined by $(f\wedge g) (x) = \min\{f(x),g(x)\}$. The next lemma states a well-known fact.

\begin{lemma} \label{L:min}
If $f,g \in \obs{X}$, then $f\wedge g \in \obs{X}$.
\end{lemma}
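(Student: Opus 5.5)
The plan is to verify the 1-Lipschitz inequality pointwise by a short case analysis, using only that $f$ and $g$ are each 1-Lipschitz. Fix $x,y \in X$. The goal is
\[
|(f\wedge g)(x) - (f\wedge g)(y)| \leq d(x,y).
\]
Since the expression is symmetric in $x$ and $y$, it suffices to prove the one-sided bound $(f\wedge g)(x) - (f\wedge g)(y) \leq d(x,y)$ for all $x,y$. Swapping the roles of $x$ and $y$ then gives the absolute-value statement.

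For the one-sided bound, choose which function realizes the minimum at $y$. Say $(f\wedge g)(y) = f(y)$; the case where $g$ attains the minimum is identical with $f$ and $g$ interchanged. Then
\[
(f\wedge g)(x) - (f\wedge g)(y) \leq f(x) - f(y) \leq d(x,y),
\]
because $(f\wedge g)(x) \leq f(x)$ by definition of the minimum, and $f$ is 1-Lipschitz. This settles the claim.

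There is no genuine obstacle here. The only point needing care is to pick the function attaining the minimum at the point being subtracted, namely $y$, and not at $x$; otherwise the inequality runs the wrong way. As an alternative route, one could use the identity
\[
f\wedge g = \tfrac12\bigl(f+g-|f-g|\bigr),
\]
but this only gives a Lipschitz constant of $2$, so the direct case argument above is the one to use. By induction, the same argument shows that finite minima $f_1\wedge\cdots\wedge f_k$ of observables are again observables, which is what the subsequent construction of $\Lambda_n(X)$ will need.
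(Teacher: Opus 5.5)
Your proof is correct and is essentially the paper's argument. The paper also bounds the minimum at $x$ by the value at $x$ of the function that realizes the minimum at $y$: in its case split where $f$ wins at $x$ and $g$ wins at $y$, it writes $f(x)-g(y)\le g(x)-g(y)\le d(x,y)$. Choosing the minimizer at $y$ from the outset, as you do, simply removes the paper's separate case in which the same function attains the minimum at both points.
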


\begin{proof}
We need to verify that $|(f\wedge g) (x) - (f\wedge g) (y)| \leq d(x,y)$, for any $x,y \in X$. If the minimum at both $x$ and $y$ is achieved by $f$ (or $g$), then the desired inequality follows from the fact that $f$ (or $g$) is 1-Lipschitz. Thus, without loss of generality, we may assume that $(f\wedge g) (x) = f(x)$ and $(f\wedge g) (y) = g(y)$. Then,
\begin{equation}
(f\wedge g) (x) - (f\wedge g) (y) = f(x) - g(y) \leq g(x) - g(y) \leq d(x,y).
\end{equation}
Similarly, $(f\wedge g) (y) - (f\wedge g) (x) \leq d(x,y)$. This proves the lemma.
\end{proof}

We now introduce an ascending chain
\begin{equation}
X \subseteq \Lambda_0(X) \subseteq \Lambda_1 (X) \subseteq \cdots \subseteq \Lambda_n (X) \subseteq \cdots \subseteq \Lambda_\infty (X) \subseteq\obs{X}
\end{equation}
of subspaces of $\obs{X}$ that are relevant to the injectivity argument. Recall that $I$ denotes the unit interval $[0,1]$. For $0 \leq n <\infty$, consider $(n+1)$-tuples $\alpha \in I^{n+1}$ and $a \in X^{n+1}$, written as $\alpha = (\alpha_0, \ldots, \alpha_n)$ and $a= (a_0, \ldots, a_n)$, and define
\begin{equation} \label{E:mmin}
f_a^\alpha \coloneqq \alpha_0 f_{a_0} \wedge \cdots \wedge \alpha_n f_{a_n}.    
\end{equation}
We refer to the points $a_0, \ldots, a_n \in X$ as the {\em anchor points} for $f_a^\alpha$, and to the scalars $\alpha_0, \ldots, \alpha_n \in I$ as the weights for $f_a^\alpha$. Clearly, $\alpha_i f_{a_i}$ is 1-Lipschitz for each $\alpha_i \in I$ and $a_i \in X$. Thus, by Lemma \ref{L:min}, $f_a^\alpha \in \Lambda(X)$. For $0 \leq n <\infty$, define
\begin{equation} \label{E:nchain}
\Lambda_n (X) = \{f_a^\alpha \colon \text{$\alpha \in I^{n+1}$ and $a \in X^{n+1}$}\}.    
\end{equation}
Since we do not require the points $a_i$ in \eqref{E:mmin} to be distinct, we have that $\Lambda_n (X) \subseteq \Lambda_{n+1} (X)$, for any $n \geq 0$. Moreover, as in Example \ref{E:dist}, we have an inclusion $X \hookrightarrow \Lambda_0 (X)$ given by $a \mapsto f_a$. We set
\begin{equation}
\Lambda_\infty (X) = \bigcup_{n=1}^\infty \Lambda_n (X).
\end{equation}
By construction, $\Lambda_n (X)$, $0 \leq n \leq \infty$, is closed under multiplication by scalars $\alpha \in I$. Moreover, $\Lambda_\infty (X)$ is also closed under the $\wedge$ operation.

To illustrate the role and utility of the observables in $\Lambda_n(X)$ in our injectivity argument, first consider distance functions $f_a \colon X \to \real$, $a \in X$. Let $B(a,r)$ denote the open ball of radius $r>0$ and center $a \in X$. Then, $B(a,r) = {f_a}^{-1}([0,r)) = {f_a}^{-1}((-\infty,r))$, where the last equality holds because $f_a \geq 0$. Therefore,
\begin{equation}
\mu (B(a,r)) = \mu (f^{-1}_a (-\infty,r)) = {f_a}_\sharp \mu ((-\infty,r))
\end{equation}
for any $\mu \in \borel{X}$, as pointed out in the Introduction. This implies that from the pushforward measures ${f_a}_\sharp \mu \in \borel{\real}$, $a \in X$, we can read off the $\mu$-mass of all open balls in $X$. More generally, let $a \in X^{n+1}$ and $\alpha = (1, \ldots, 1) \in I^{n+1}$. Then, the observable $f_a^\alpha = f_{a_0} \wedge \cdots \wedge f_{a_n}$ has the property that
\begin{equation} 
\bigcup_{i=0}^n B(a_i, r) = (f_a^\alpha)^{-1} (-\infty,r).    
\end{equation}
Therefore,
\begin{equation} \label{E:samerad}
\mu \left(\cup_{i=0}^n B(a_i, r) \right) = 
{f_a^\alpha}_\sharp \mu ((-\infty, r)),
\end{equation}
for any $\mu \in \borel{X}$. In other words, the $\mu$-mass of an arbitrary union of $n+1$ open balls of the same radius is determined by the observable measures ${f_a^\alpha}_\sharp \mu$. A more general form of \eqref{E:samerad} that allows balls of different radii is given in Lemma \ref{L:uniball} below. 

To visualize the functions $f_a^\alpha$, it may be helpful to consider the weighted Voronoi cells for the anchor points $a_i$, $0 \leq i \leq n$, which are given by
\begin{equation}
V_i \coloneqq \{x \in X \colon \alpha_i f_{a_i} (x)= f_a^\alpha (x)\}
= \{x \in X \colon \alpha_i d(x,a_i) \leq \alpha_j d(x, a_j), \text{for all $j\ne i$}\}.
\end{equation}
$V_i \subseteq X$ is the set of points whose closest anchor point is $a_i$ as measured by weighted distances. Figure \ref{fig:voronoi}(a) shows the level curves and weighted Voronoi cells for a randomly selected function $f \in \Lambda_4 (\real^2)$. The boundary of a particular sublevel set, which is the union of 5 open balls, is highlighted in blue. Figure \ref{fig:voronoi}(b) depicts the weighted Voronoi cells for a randomly chosen $f \in \Lambda_2 (V)$, where $V$ is the vertex set of a weighted graph equipped with the geodesic distance.
\begin{figure}
    \centering
    \begin{tabular}{cc}
    \begin{tabular}{c}
    \includegraphics[width=0.27\linewidth]{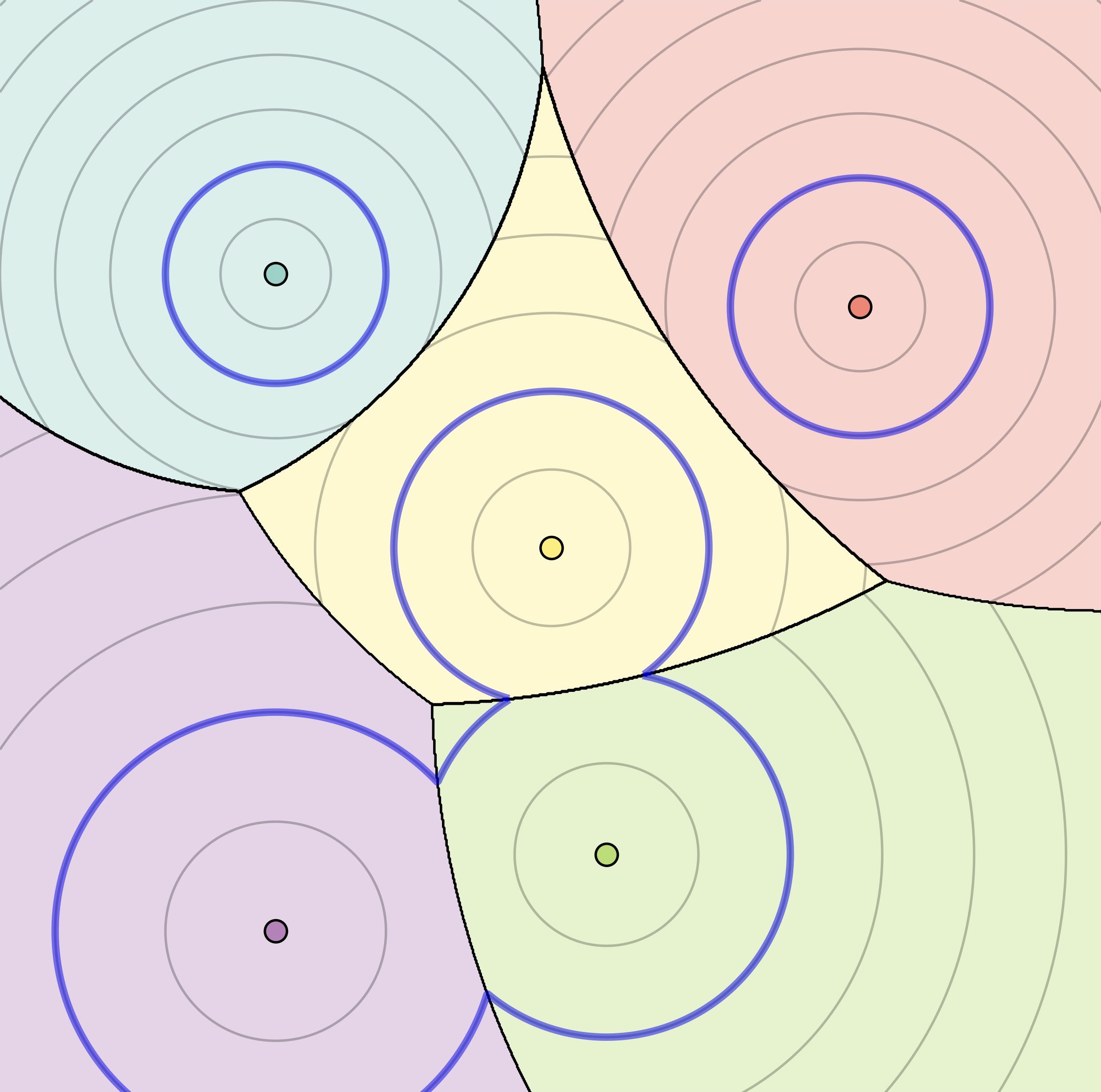}
    \end{tabular}
    \quad & \quad
    \begin{tabular}{c}
    \includegraphics[width=0.45\linewidth]{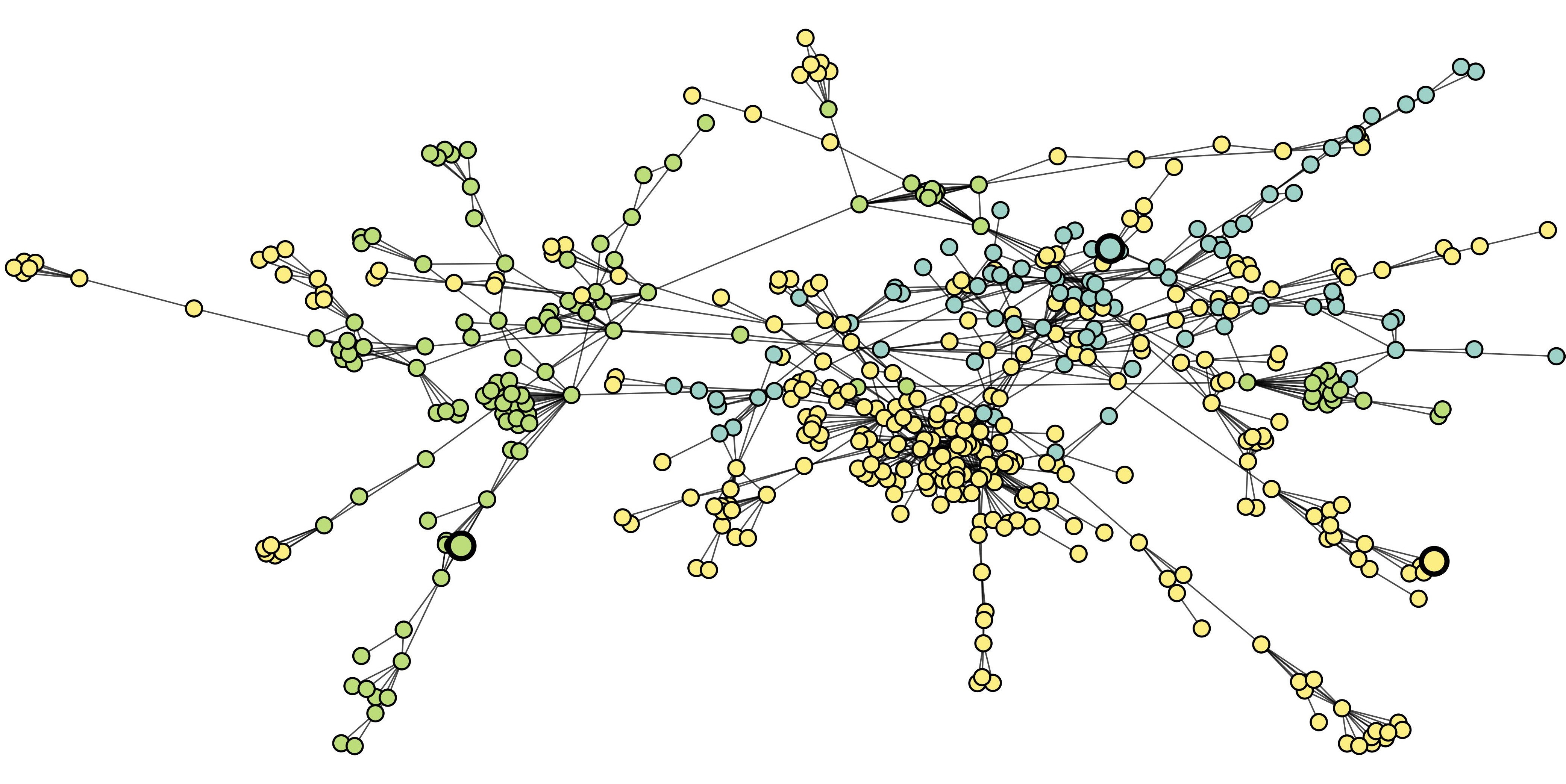}
    \end{tabular}
    \\
    (a) 5 anchor points & (b) 3 anchor points
    \end{tabular}
    \caption{(a) The level sets of a randomly selected function $f \in \Lambda_4(\real^2)$ and the respective weighted Voronoi cells and (b) the weighted Voronoi cells for a random $f \in \Lambda_2 (V)$, where $V$ is the vertex set of a weighted graph equipped with the shortest path distance.}
    \label{fig:voronoi}
\end{figure}

\begin{definition}
Let $\mu \in \borel{X}$ and $0 \leq n \leq \infty$. The {\em $n$-distance transform} $\otr{\mu}^n \colon \Lambda_n (X) \to \borel{\real}$ is the restriction of $\otr{\mu} \colon \obs{X} \to \borel{\real}$ to $\Lambda_n (X)$. We denote the restriction of $\otr{\mu}$ to $X \subseteq \Lambda (X)$ by $\etr{\mu} \colon X \to \borel{\real}$ and refer to it simply as the {\em distance transform}.
\end{definition}

As above, for $\mu \in \wasp{X}$, the transform $\otr{\mu}^n$ may be viewed as having co-domain $\wasp{\real}$.

\begin{theorem}[Injectivity] \label{T:injective}
If $\mu,\nu \in \borel{X}$ and $\otr{\mu}^\infty = \otr{\nu}^\infty$, then $\mu=\nu$. In particular, $\otr{\mu} = \otr{\nu}$ if and only if $\otr{\mu}^\infty = \otr{\nu}^\infty$.
\end{theorem}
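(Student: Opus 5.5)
The plan is to show that the pushforwards ${f_a^\alpha}_\sharp\mu$ determine the $\mu$-mass of every finite union of open balls, with arbitrary radii. I would then use a $\pi$--$\lambda$ (or monotone class) argument to conclude that $\mu$ and $\nu$ agree on all Borel sets.

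The first step is a version of \eqref{E:samerad} for unequal radii. Given centers $a_0,\dots,a_n$ and radii $r_0,\dots,r_n>0$, fix $r=\max_i r_i$ and set $\alpha_i = r/r_i \ge 1$. These weights lie outside $I$, so instead I take $\alpha_i = r_{\min}/r_i\in(0,1]$ with $r_{\min}=\min_i r_i$. Then $\alpha_i d(x,a_i)<r_{\min}$ if and only if $d(x,a_i)<r_i$. Hence
\[
(f_a^\alpha)^{-1}(-\infty,r_{\min}) = \bigcup_{i=0}^n B(a_i,r_i),
\]
and so $\mu(\bigcup_i B(a_i,r_i)) = {f_a^\alpha}_\sharp\mu((-\infty,r_{\min}))$. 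Consequently, if $\otr{\mu}^\infty=\otr{\nu}^\infty$, then $\mu$ and $\nu$ agree on the class $\mathcal U$ of all finite unions of open balls.

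The second step passes from $\mathcal U$ to the Borel $\sigma$-algebra. The class $\mathcal U$ is closed under finite unions but not obviously under intersections, so I would not use it directly as a $\pi$-system. Instead I would use separability. Every open set $G\subseteq X$ is a countable union of balls $B(q,s)$ with $q$ in a countable dense set $Q$ and $s$ rational. Indeed, for $x\in G$ with $B(x,2s)\subseteq G$, pick $q\in Q$ with $d(q,x)<s$; then $x\in B(q,s)\subseteq G$. So $G=\bigcup_k U_k$ is an increasing union of sets $U_k\in\mathcal U$, and continuity from below gives $\mu(G)=\lim_k\mu(U_k)=\lim_k\nu(U_k)=\nu(G)$. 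Since the open sets form a $\pi$-system generating the Borel $\sigma$-algebra, Dynkin's $\pi$--$\lambda$ theorem yields $\mu=\nu$. Alternatively, one can note that Borel probability measures on metric spaces are determined by their values on open sets by outer regularity.

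The "in particular" clause is immediate. If $\otr{\mu}=\otr{\nu}$, then the restrictions to $\Lambda_\infty(X)$ agree. Conversely, $\otr{\mu}^\infty=\otr{\nu}^\infty$ forces $\mu=\nu$, hence $\otr{\mu}=\otr{\nu}$ on all of $\obs{X}$. The only mildly delicate point is the weight normalization in the first step, since weights must lie in $I$. This is handled by rescaling relative to the smallest radius. No completeness of $X$ is needed beyond separability.
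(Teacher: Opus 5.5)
Your proposal is correct and follows essentially the paper's route: you realize finite unions of balls with arbitrary radii as sublevel sets $(f_a^\alpha)^{-1}(-\infty,r_{\min})$ of observables in $\Lambda_\infty(X)$, exhaust each open set by an increasing sequence of such unions using separability, and conclude by continuity from below. The paper builds these observables inductively via Lemma~\ref{L:union} and Proposition~\ref{P:chain}, whereas you build them in one step with weights $r_{\min}/r_i$ (as in Lemma~\ref{L:uniball}), and your explicit $\pi$--$\lambda$ step only makes precise the paper's remark that agreement on open sets suffices.
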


Lemma \ref{L:union} and Proposition \ref{P:chain} below are used in the proof of injectivity.

\begin{lemma} \label{L:union}
Let $f,g \colon X \to \real$. If $U = f^{-1} (-\infty,r)$ and $V = g^{-1} (-\infty, r')$, with $0 <r \leq r'$, then $h = f \wedge (\alpha g)$ satisfies $U \cup V = h^{-1} (-\infty,r)$, where $\alpha = r/r'$.
\end{lemma}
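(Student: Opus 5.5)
The plan is to prove the set equality $U \cup V = h^{-1}(-\infty,r)$ by double inclusion, using only the pointwise identity
\[
h(x) = \min\{f(x), \alpha g(x)\}, \qquad \alpha = r/r' \in (0,1].
\]
The single fact doing the work is that multiplying by the positive scalar $\alpha$ rescales the threshold. Since $\alpha > 0$, we have $g(x) < r'$ if and only if $\alpha g(x) < \alpha r' = r$. Hence
\[
V = g^{-1}(-\infty,r') = (\alpha g)^{-1}(-\infty,r).
\]
Positivity of $\alpha$ comes from the hypothesis $0 < r \le r'$, and this is the only place that hypothesis is used. The condition $r \le r'$ also gives $\alpha \le 1$, so $\alpha \in I$, which matters later for membership in $\Lambda_n(X)$ but not for the set equality itself.

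With $V$ rewritten, both sets are sublevel sets at the common level $r$, and the remaining step is the elementary fact that a minimum lies below a level exactly when one of its terms does:
\[
\min\{f(x), \alpha g(x)\} < r \iff f(x) < r \ \text{ or } \ \alpha g(x) < r.
\]
For the forward direction, the minimum equals one of its two arguments, so if it is below $r$ then that argument is. For the reverse direction, the minimum is at most each argument, so if either is below $r$ then so is the minimum. Translating back, $x \in h^{-1}(-\infty,r)$ if and only if $x \in f^{-1}(-\infty,r) = U$ or $x \in (\alpha g)^{-1}(-\infty,r) = V$, which is the claim.

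There is no real obstacle; the argument is a direct verification. The only point needing care is the strictness of the inequalities, which is preserved because $\alpha$ is strictly positive. If $\alpha$ were allowed to be $0$, the set $(\alpha g)^{-1}(-\infty,r)$ would be all of $X$ and the identity would fail, so the hypothesis $r > 0$ is essential.
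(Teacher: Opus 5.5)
Your proof is correct and follows the paper's own argument: you rewrite $V=(\alpha g)^{-1}(-\infty,r)$ using $\alpha>0$, then use that $\min\{f(x),\alpha g(x)\}<r$ if and only if $f(x)<r$ or $\alpha g(x)<r$. One small slip in your closing aside: $\alpha=r/r'=0$ forces $r=0$, so $(\alpha g)^{-1}(-\infty,r)$ would be empty rather than all of $X$; the identity still fails in that case, so your conclusion that $r>0$ is needed stands.
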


\begin{proof}
Let $\alpha = r/r' \leq 1$ and $h = f \wedge (\alpha g)$. Since $g^{-1} (-\infty, r') = (\alpha g)^{-1} (-\infty, r)$, we have
\begin{equation}
h^{-1} (-\infty,r) = \{x \in X \colon \text{$f(x) < r$ or $\alpha g(x) < r$}\}
= \{x \in X \colon \text{$f(x) < r$ or $g(x) < r'$}\} = U \cup V ,
\end{equation}
as claimed.
\end{proof}

\begin{proposition} \label{P:chain}
If  $U \subseteq X$ is an open set, then there exists a chain $U_1 \subseteq U_2 \subseteq \cdots \subseteq U_j \subseteq \cdots$ of open sets such that $U= \bigcup_{j=1}^\infty U_j$ and each $U_j$ can be expressed as $U_j = f_j^{-1} ((-\infty,r_j))$ with $f_j \in \Lambda_\infty (X)$ and $r_j>0$.
\end{proposition}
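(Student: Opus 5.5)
Since $X$ is separable, I will fix a countable dense subset $D=\{d_1,d_2,\ldots\}\subseteq X$. The plan is to write $U$ as a countable union of open balls. Then I will take finite partial unions and realize each as a strict sublevel set of a single observable in $\Lambda_\infty(X)$, using Lemma \ref{L:union} repeatedly.

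\emph{Step 1 (countable ball cover).} If $U=\emptyset$, I take every $U_j=\emptyset$; each is realized, for instance, as $f_{a}^{-1}((-\infty,r_j))$ with $\alpha_0=0$ and $r_j$ small. Strictly speaking, that observable is $0\cdot f_a$, which is $\equiv 0$ and gives $\emptyset$ for any $r_j>0$; this is allowed since weights lie in $I$. So assume $U\neq\emptyset$. Consider the countable family $\mathcal{B}$ of balls $B(q,s)$ with $q\in D$, $s\in\mathbb{Q}_{>0}$ and $B(q,s)\subseteq U$. I claim $U=\bigcup\mathcal{B}$. Given $x\in U$, choose $\epsilon>0$ with $B(x,\epsilon)\subseteq U$. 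Pick $q\in D$ with $d(x,q)<\epsilon/3$ and a rational $s$ with $\epsilon/3<s<2\epsilon/3$. Then $x\in B(q,s)$, and $B(q,s)\subseteq B(x,\epsilon)\subseteq U$ by the triangle inequality. Enumerate $\mathcal{B}=\{B(a_i,s_i)\}_{i\ge 1}$; if the family is finite, repeat elements.

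\emph{Step 2 (finite unions as sublevel sets).} Set $U_j=\bigcup_{i=1}^j B(a_i,s_i)$. These are open, nested, and $\bigcup_j U_j=U$. I then show by induction on $j$ that $U_j=f_j^{-1}((-\infty,r_j))$ with $f_j\in\Lambda_{j-1}(X)$ and $r_j=\min_{i\le j}s_i$. For $j=1$, take $f_1=f_{a_1}$ and $r_1=s_1$. For the inductive step, let $f=f_j$, $r=r_j$, and $g=f_{a_{j+1}}$ with $r'=s_{j+1}$.

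If $r\le r'$, Lemma \ref{L:union} gives $U_{j+1}=(f_j\wedge (r/r')f_{a_{j+1}})^{-1}(-\infty,r)$. If instead $r>r'$, I swap the roles of $f$ and $g$ and apply the lemma with the ratio $r'/r$. This gives $U_{j+1}=(f_{a_{j+1}}\wedge (r'/r) f_j)^{-1}(-\infty,r')$.

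\emph{Step 3 (staying in the class).} It remains to check that the new observable lies in $\Lambda_j(X)$. Scaling $f_j=\alpha_0f_{b_0}\wedge\cdots\wedge\alpha_{j-1}f_{b_{j-1}}$ by a factor $\beta\in I$ gives $\beta\alpha_0 f_{b_0}\wedge\cdots\wedge\beta\alpha_{j-1}f_{b_{j-1}}$, because $\min$ commutes with multiplication by a nonnegative scalar. The weights $\beta\alpha_i$ remain in $I$. Wedging with one further $\gamma f_{a_{j+1}}$, $\gamma\in I$, then yields an element of $\Lambda_j(X)$.

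The only mild obstacle is the case distinction $r\le r'$ versus $r>r'$ in Step 2, which is needed because Lemma \ref{L:union} requires the threshold radius to be the smaller one. Step 3's observation, that scaling by a weight in $I$ and wedging with one more weighted distance function keeps us in $\Lambda_\infty(X)$, handles both cases. This completes the construction.
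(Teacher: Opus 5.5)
For nonempty $U$ your argument is correct and is essentially the paper's proof. You write $U$ as a countable union of balls, take finite partial unions, and build $f_{j+1}$ from $f_j$ via Lemma~\ref{L:union}, with $r_{j+1}=\min\{r_j,s_{j+1}\}$. You also fill in details the paper leaves implicit:
\begin{itemize}
\item the explicit countable family of balls built from a dense set;
\item the swap of roles when the new radius is smaller, which is needed because Lemma~\ref{L:union} takes the smaller radius as the threshold;
\item the check that scaling by a weight in $I$ and then wedging keeps you in $\Lambda_\infty(X)$.
\end{itemize}

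The one thing that is wrong is your treatment of $U=\emptyset$. Since $0\cdot f_a\equiv 0$ and $r_j>0$, you get $(0\cdot f_a)^{-1}((-\infty,r_j))=X$, not $\emptyset$.

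In fact $\emptyset$ can never be realized in this form. If $f=\alpha_0 f_{a_0}\wedge\cdots\wedge\alpha_n f_{a_n}$, then $f(a_0)\le \alpha_0\, d(a_0,a_0)=0<r$. So $a_0\in f^{-1}((-\infty,r))$ for every $r>0$. The proposition is therefore false for $U=\emptyset$ whenever $X\neq\emptyset$, and must be read for nonempty $U$. That is what the paper's proof tacitly assumes when it writes $U$ as a union of balls.

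Restricting to nonempty $U$ costs nothing in Theorem~\ref{T:injective}, since $\mu(\emptyset)=\nu(\emptyset)=0$. You should drop that case, or restrict the statement to nonempty $U$, rather than try to realize $\emptyset$ as a sublevel set.
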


\begin{proof}
Since $X$ is separable, we can write $U =\bigcup_{i=1}^{\infty} B(a_i, \epsilon_i)$, a countable union of open balls centered at $a_i \in X$ with radius $\epsilon_i>0$. Let $U_j = \bigcup_{i=1}^{j} B(a_i,\epsilon_i)$. Clearly $U_j \subseteq U_{j+1}$, for each $j \geq 1$, and $U= \bigcup_{j=1}^\infty U_j$. We construct the sequences $(f_j)_{j=1}^\infty$ and $(r_j)_{j=1}^\infty$ by induction.

Since $U_1 = B(a_1,\epsilon_1)$, we take $f_1= f_{a_1}$ and $r_1=\epsilon_1$. Then, $U_1=B(a_1, \epsilon_1) = f_{a_1}^{-1}(-\infty, r_1) $ $= f_1^{-1}(-\infty, r_1)$. For the inductive step, suppose $f_n \in \Lambda_\infty (X)$ and $r_n>0$ have been constructed such that $U_n = f_n^{-1}(-\infty,r_n)$. Let $r_{n+1} = \min \{r_n, \epsilon_{n+1}\} >0$. Since $B(a_{n+1}, \epsilon_{n+1}) = f_{a_{n+1}}^{-1} (-\infty, \epsilon_{n+1})$, Lemma \ref{L:union} ensures that there exists $f_{n+1} \in \Lambda_\infty (X)$ such that 
\begin{equation}
U_{n+1} = U_n \cup B(a_{n+1}, \epsilon_{n+1}) = f_{n+1}^{-1} (-\infty, r_{n+1}).
\end{equation}
This completes the proof.
\end{proof}

\begin{proof}[Proof of Theorem \ref{T:injective}]

Since the Borel $\sigma$-algebra is generated by the open sets of $X$, it suffices to show that $\mu(U)=\nu(U)$, for any open set $U \subseteq X$. By Proposition \ref{P:chain}, there is a chain $U_1 \subseteq U_2 \subseteq \cdots \subseteq U_j \subseteq \cdots$ of open sets such that $U= \bigcup_{j=1}^\infty U_j$ and each $U_j$ is of the form $U_j = f_j^{-1} ((-\infty,r_j))$, with $f_j \in \Lambda_\infty (X)$ and $r_j>0$. This fact and the assumption that $\otr{\mu}^\infty = \otr{\nu}^\infty$ imply that, for each $j\geq 1$, we have 
\begin{equation}
\mu(U_j) = \mu \big(f_j^{-1} (-\infty,r_j)\big) = {f_j}_\sharp (\mu) (-\infty, r_j) = {f_j}_\sharp (\nu) (-\infty, r_j) 
= \nu \big(f_j^{-1} (-\infty,r_j)\big) = \nu(U_j).
\end{equation}
Therefore, $\mu(U) = \lim_{j \to \infty} \mu(U_j) = \lim_{j \to \infty} \nu(U_j) = \nu(U)$, as desired. The second statement is a direct consequence of the first.
\end{proof}

\section{Identifiability and Dimension} \label{S:ndim}

Parallel to the chain of observables $\Lambda_n (X)$, we introduce a chain $\Omega_n (X)$ of subspaces of $\borel{X}$,  which induce a stratification of $\borel{X}$ by measures whose supports satisfy a dimensionality constraint. The main result of this section is a stratified injectivity theorem that asserts that any probability measure $\mu \in \Omega_n(X)$ is determined by its $n$-distance transform $\otr{\mu}^n \colon \Lambda_n (X) \to \borel{\real}$.

For a closed subset $S \subseteq X$, we define an (extended) non-negative integer $\mdim{X} (S)$ that can be thought of as a covering dimension of $S$ by open balls in $X$. 

\begin{definition}
Let $S \subseteq X$ be a closed subset.
\begin{enumerate}[label=\rm{(\roman*)}]
\item $S \subseteq X$ has {\em covering dimension by balls} $\leq n$, if any cover of $S$ by open sets in $X$ can be refined to a cover of $S$ by open balls of $X$ such that no sub-collection of $k$ distinct balls in the refinement has non-empty intersection if $k> n+1$.
\item The {\em metric covering dimension} of $S$ in $X$ is defined as
\[
\mdim{X}(S) \coloneqq \inf \{n \colon \text{covering dimension of $S$ by balls} \leq n\}.
\]
If no such non-negative integer $n$ exists, set $\mdim{X} (S) \coloneqq \infty$. We adopt the abbreviation $\mdim{X} \coloneqq \mdim{X} (X)$ and refer to $\mdim{X}$ as the {\em metric covering dimension} of $X$. 
\end{enumerate}
\end{definition}

\begin{example}
    The most practically relevant setting is $S \subseteq X$, $|S| <\infty$. In this case, any open cover of $S$ can be refined to a cover consisting of pairwise disjoint balls, implying that $\mdim{X}(S) = 0$. For a less trivial example, suppose that $X = \mathbb{R}$, with its standard metric.  Clearly, the Lebesgue covering dimension of $X$ lower bounds $\mdim{X}$, so that $\mdim{\mathbb{R}} \geq 1$. Now let $\mathcal{U}$ be an open cover of $\mathbb{R}$ and choose an open refinement $\mathcal{V}$ such that no 3 distinct elements of $\mathcal{V}$ have nonempty intersection. Each set in $\mathcal{V}$ is a disjoint union of open intervals, which we can assume to be bounded without loss of generality (by refining further, as necessary). Let $\mathcal{W}$ be the collection of all distinct open intervals appearing in the elements of $\mathcal{V}$. Then $\mathcal{W}$ is a refinement of $\mathcal{U}$ to a cover by open balls, and it is simple to show that no three distinct balls can have nonempty intersection. It follows that $\mdim{\mathbb{R}} = 1$. 
\end{example}

\begin{remark}
If $\phi \colon X \to Y$ is an isometry, then $\mdim{X} (A) = \mdim{Y} (\phi(A))$ for all $A \subset X$. 
\end{remark}

\begin{definition}
For $0 \leq n < \infty$, define $\Omega_n (X) \coloneqq \{\mu \in \borel{X} \colon \mdim{X} (\text{supp}\,\mu) \leq n\}$, the set of probability measures whose supports have metric covering dimension $\leq n$.    
\end{definition}

The sets $\Omega_n (X) \subseteq \borel{X}$ form a chain
\begin{equation}
\Omega_0 (X) \subseteq \cdots \subseteq \Omega_n (X) \subseteq 
\cdots \subseteq \Omega_\infty (X) = \borel{X}
\end{equation}
and produce an associated stratification of $\borel{X}$ whose strata $P_n(X)$ are:
\begin{enumerate}[label=\rm{(\roman*)}]
\item $P_0 (X) \coloneqq \Omega_0 (X)$, comprising all probability measures with $0$-dimensional support;
\item $P_n (X) \coloneqq \Omega_n(X) \setminus \Omega_{n-1} (X), 1 \leq n <\infty$, the set of all probability measures whose supports have dimension exactly $n$;
\item $P_\infty(X) \coloneqq \borel{X} \setminus \cup_{n=0}^\infty \Omega_n (X)$, comprising all probability measures with infinite-dimensional support.
\end{enumerate}
Clearly, if the metric covering dimension of $X$ satisfies $\mdim{X} <\infty$, then this is a finite stratification because $P_n(X) = \emptyset$ for $n > \mdim{X}$.

\begin{example} \label{EX:empiric}
In practice, we are typically interested in empirical measures obtained from independent random samples of a measure $\mu$. These empirical measures are contained in the set $\empiric{X}$ of finitely supported measures; that is,
\begin{equation}
\empiric{X} :=\Big\{\sum_{i=1}^n \lambda_i \delta_{x_i} \colon
\text{$n \in \mathbb{N}$, $x_i \in X$, $\lambda_i>0$, and $\lambda_1 + \cdots + \lambda_n =1$} \Big\},
\end{equation}
where $\delta_x$ denotes the Dirac atom supported at $x \in X$. Clearly, $\empiric{X} \subseteq \Omega_0 (X)$.
\end{example}

The next goal is to obtain an analogue of Theorem \ref{T:injective} for $\otr{\mu}^n$, $0 \leq n <\infty$. The first step toward this objective is to express the measure of an arbitrary union or intersection of $n+1$ open balls in terms of  pushforward measures by observables in $\Lambda_n (X)$. We adopt the following notation. Given $n \geq 0$, and $0 \leq k \leq n$, let
\begin{equation}
I_k \coloneqq \{(i_0, \ldots, i_k) \colon 0 \leq i_0 < \cdots <i_k \leq n\}. 
\end{equation} 
Let $B(a_i, r_i) \subseteq X$, $a_i \in X$ and $r_i >0$, $0 \leq i \leq n$, be a collection of open balls in $X$. Without loss of generality, we can assume that the balls are ordered so that $r_0 \leq \ldots \leq r_n$. For $\imath=(i_0, \ldots, i_k) \in I_k$, let
\begin{equation} \label{E:hi}
h_\imath = f_{a_{i_0}} \wedge \left(\frac{r_{i_0}}{r_{i_1}}\right) f_{a_{i_1}} \wedge \cdots \wedge  \left(\frac{r_{i_0}}{r_{i_k}}\right) f_{a_{i_k}} \in \Lambda_k(X) \subseteq \Lambda_n (X).    
\end{equation}
For $n\geq 0$, we write $\imath_n = (0, \ldots, n)$, so that
\begin{equation} \label{E:hn}
h_{\imath_n} = f_{a_0} \wedge \left(\frac{r_0}{r_1}\right) f_{a_1} \wedge \cdots \wedge  \left(\frac{r_0}{r_n}\right) f_{a_n}.    
\end{equation}

\begin{lemma} \label{L:uniball}
The function $h_{\imath_n} \in \Lambda_n (X)$ satisfies:
\begin{enumerate}[label=\rm{(\roman*)}]
\item $\bigcup_{i=0}^n B(a_i,r_i) =  h_{\imath_n}^{-1} ((-\infty,r_0))$.
\item $\mu (\bigcup_{i=0}^n B(a_i,r_i)) = (h_{\imath_n})_\sharp \mu ((-\infty,r_0))$, for any $\mu \in \borel{X}$.
\end{enumerate}
\end{lemma}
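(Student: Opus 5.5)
The plan is to prove (i) by a direct set-theoretic computation and then deduce (ii) from the definition of the pushforward. Alternatively, one could induct using Lemma \ref{L:union}, but the direct route is cleaner and avoids tracking how the weights compose under repeated wedges.

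For (i), set $\beta_i = r_0/r_i \in (0,1]$ for $0 \leq i \leq n$. Since the radii are ordered with $r_0 \leq \cdots \leq r_n$, each $\beta_i$ is a legitimate weight in $I$. With these weights we have $h_{\imath_n} = \beta_0 f_{a_0} \wedge \cdots \wedge \beta_n f_{a_n}$, where $\beta_0 = 1$, so $h_{\imath_n} = f_a^\beta \in \Lambda_n(X)$ by \eqref{E:nchain}. A minimum of finitely many real numbers is $< r_0$ if and only if at least one of them is $< r_0$. Hence
\[
h_{\imath_n}^{-1}((-\infty,r_0)) = \bigcup_{i=0}^n \{x \in X \colon \beta_i d(x,a_i) < r_0\}.
\]
Because $\beta_i > 0$, the condition $\beta_i d(x,a_i) < r_0$ is equivalent to $d(x,a_i) < r_0/\beta_i = r_i$. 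So the $i$-th set is exactly $B(a_i,r_i)$, and (i) follows.

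For (ii), $h_{\imath_n}$ is 1-Lipschitz, hence continuous and Borel measurable. By the definition of the pushforward, $(h_{\imath_n})_\sharp \mu((-\infty,r_0)) = \mu(h_{\imath_n}^{-1}((-\infty,r_0)))$. Applying (i) to the right-hand side gives (ii).

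No step here is a genuine obstacle. The only points needing care are that the weights $r_0/r_i$ lie in $(0,1]$, which is where the ordering assumption on the radii is used, and that they are strictly positive, so that dividing by $\beta_i$ preserves the strict inequality.
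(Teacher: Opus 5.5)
Your proof is correct and is essentially the paper's argument. The paper proves (i) by induction on $n$: it writes the union as $h_{\imath_{n-1}}^{-1}((-\infty,r_0)) \cup f_{a_n}^{-1}((-\infty,r_n))$ and applies Lemma \ref{L:union} with $\alpha = r_0/r_n$, and your one-shot computation (a finite minimum is $< r_0$ iff some term is, and $\beta_i d(x,a_i) < r_0 \iff d(x,a_i) < r_i$) is that induction unrolled, with (ii) following from (i) and the definition of pushforward exactly as in the paper.
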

\begin{proof}
To prove (i), recall that the balls are ordered by increasing radii. The argument is by induction on $n$. The lemma clearly holds for $n=0$ because $B(a_0,r_0) = f^{-1}_{a_0}((-\infty,r_0))$. For the inductive step, write
\begin{equation}
\bigcup_{i=0}^n B(a_i,r_i) = \Big(\bigcup_{i=0}^{n-1} B(a_i,r_i)\Big) \bigcup   B(a_n,r_n) = h^{-1}_{\imath_{n-1}} ((-\infty,r_0)) \cup f^{-1}_{a_n} ((-\infty, r_n)).
\end{equation}
Then, the claim follows from Lemma \ref{L:union}. Statement (ii) follows from (i) and the definition of the pushforward of a measure. 
\end{proof}

To simplify notation, for any $r>0$, we write $I_r = (-\infty, r)$. For $\imath = (i_0, \ldots, i_k)$, we let $I_\imath\coloneqq I_{r_{i_0}} = (-\infty, r_{i_0})$. 

\begin{lemma} \label{L:mint}
Let $B(a_i, r_i) \subseteq X$, $a_i \in X$ and $r_i >0$, $0 \leq i \leq n$, be a collection of open balls in $X$. Then, the equality 
\[
\mu(\cap_{i=0}^n B(a_i,r_i)) = \sum_{k=0}^n \sum_{\imath \in I_k} (-1)^k {h_\imath}_\sharp \mu (I_\imath)
\]
holds for all $\mu \in \borel{X}$, where $h_\imath \in \Lambda_k (X)$ is the function defined in \eqref{E:hi}.
\end{lemma}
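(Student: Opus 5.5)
The plan is to derive the formula from the inclusion--exclusion principle for unions and intersections, and then rewrite each union term using Lemma \ref{L:uniball}. Set $B_i = B(a_i,r_i)$ for $0 \le i \le n$, with the balls ordered so that $r_0 \le \cdots \le r_n$. This is the standing convention under which the $h_\imath$ of \eqref{E:hi} were defined.

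\textbf{Step 1: inclusion--exclusion.} We use the dual form of inclusion--exclusion, which expresses the measure of an intersection through measures of unions:
\[
\mu\Big(\bigcap_{i=0}^n B_i\Big) = \sum_{k=0}^n (-1)^k \sum_{\imath=(i_0,\ldots,i_k) \in I_k} \mu\Big(\bigcup_{j=0}^k B_{i_j}\Big).
\]
This identity holds for any finite measure. One way to see it is to apply the usual inclusion--exclusion for unions to the complements $B_i^c$ and use De Morgan. A more direct route is to integrate the pointwise identity
\[
\prod_{i}\mathbb{1}_{B_i} = \sum_{\emptyset \ne J} (-1)^{|J|-1}\,\mathbb{1}_{\bigcup_{j\in J} B_j}.
\]
This identity follows by checking, at each point $x$, the contribution of the index set $S_x=\{i : x\in B_i\}$. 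Every $J$ meeting $S_x$ contributes, and the alternating sum $\sum_{J\cap S_x\neq\emptyset}(-1)^{|J|-1}$ equals $1$ exactly when $S_x=\{0,\ldots,n\}$ and $0$ otherwise. We then re-index: a nonempty $J$ of cardinality $k+1$ corresponds to $\imath\in I_k$ with increasing entries, so $(-1)^{|J|-1}=(-1)^k$.

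\textbf{Step 2: rewrite each union.} Fix $\imath=(i_0,\ldots,i_k)\in I_k$. Since $i_0<\cdots<i_k$, the radii satisfy $r_{i_0}\le\cdots\le r_{i_k}$. We apply Lemma \ref{L:uniball} to the subfamily $B(a_{i_0},r_{i_0}),\ldots,B(a_{i_k},r_{i_k})$, relabelled $0,\ldots,k$. The function $h_{\imath_k}$ for this relabelled family is exactly $h_\imath$ from \eqref{E:hi}. Lemma \ref{L:uniball}(ii) therefore gives
\[
\mu\Big(\bigcup_{j=0}^k B_{i_j}\Big)={h_\imath}_\sharp\mu\big((-\infty,r_{i_0})\big)={h_\imath}_\sharp\mu(I_\imath).
\]
Substituting this into Step 1 yields the stated formula. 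We also note that $h_\imath\in\Lambda_k(X)$, because its weights $r_{i_0}/r_{i_j}$ all lie in $I$.

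\textbf{Main obstacle.} The only delicate point is bookkeeping: the signs and indexing in the dual inclusion--exclusion formula must match $(-1)^k$ over $I_k$, and the ordering of radii must be inherited by every subtuple so that Lemma \ref{L:uniball} applies with smallest radius $r_{i_0}$. All sets involved are open, hence Borel, and $\mu$ is finite, so no measurability or convergence issues arise.
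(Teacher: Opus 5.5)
Your proposal is correct and matches the paper's proof. You apply the dual inclusion--exclusion principle to write $\mu(\cap_{i=0}^n B(a_i,r_i))$ as an alternating sum over $I_k$ of measures of unions, and then convert each union into ${h_\imath}_\sharp\mu(I_\imath)$ via Lemma~\ref{L:uniball}(ii), using the ordering of radii inherited by each sub-tuple; the only difference is that you verify the dual inclusion--exclusion identity directly through the pointwise indicator computation, whereas the paper simply cites it.
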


\begin{proof}
As before, we assume that the balls $B(a_i,r_i)$ are ordered by non-decreasing radii. By the dual inclusion-exclusion principle \cite[Chapter 2]{Graham1994}, we have
\begin{equation} \label{E:d-incexc}
\mu\big(\cap_{i=0}^n B(a_i,r_i)\big) =  
\sum_{k=0}^n \sum_{\imath \in I_k}  (-1)^k \mu \big(B(a_{i_0}, r_{i_0}) \cup \cdots \cup (B(a_{i_k}, r_{i_k})\big).
\end{equation}
By Lemma \ref{L:uniball}(ii), for each $\imath \in I_k$, 
\begin{equation}
\mu \big(B(a_{i_0}, r_{i_0}) \cup \cdots \cup (B(a_{i_k}, r_{i_k})\big) = {h_\imath}_\sharp \mu (I_\imath).
\end{equation}
This proves the lemma.
\end{proof}

\begin{proposition} \label{P:union}
Let $B(a_i, r_i) \subseteq X$, $a_i \in X$ and $r_i >0$, $0 \leq i \leq N$, be a collection of $N+1$ pairwise distinct open balls in $X$, and let $1 \leq n \leq N$. If  
\[
B (a_{i_0},r_{i_0}) \cap \cdots  \cap B (a_{i_k},r_{i_k}) = \emptyset,
\]
for any $0 \leq i_0 < \cdots <i_k \leq N$ with $k>n$, then there exist an integer $n_0 >0$, a collection of functions $\phi_j \in \Lambda_n (X)$ and intervals $I_j \subseteq \real$, $1 \leq j \leq n_0$, such that 
\[
\mu\big(\cup_{i=0}^N B(a_i,r_i)\big) = \sum_{j=1}^{n_0} \epsilon_j
\phi_{j\sharp} \mu (I_j)
\]
for any $\mu \in \borel{X}$, where $\epsilon_j = \pm 1$.
\end{proposition}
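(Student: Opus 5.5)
Apply the (ordinary) inclusion--exclusion principle to the union of the $N+1$ balls:
\[
\mu\big(\cup_{i=0}^N B(a_i,r_i)\big) = \sum_{k=0}^{N} (-1)^k \sum_{0 \leq i_0<\cdots<i_k \leq N} \mu\big(B(a_{i_0},r_{i_0}) \cap \cdots \cap B(a_{i_k},r_{i_k})\big).
\]
By hypothesis, every intersection of more than $n+1$ distinct balls is empty. Hence all terms with $k>n$ vanish, and the sum truncates to $0 \leq k \leq n$. Each surviving term is the $\mu$-measure of an intersection of at most $n+1$ open balls.

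Next, treat each surviving intersection $B(a_{i_0},r_{i_0}) \cap \cdots \cap B(a_{i_k},r_{i_k})$ with $k \leq n$ by Lemma \ref{L:mint}, applied to this family of $k+1$ balls reordered by non-decreasing radii. That lemma expresses its $\mu$-measure as a signed sum $\sum_{l=0}^k \sum_{\jmath} (-1)^l {h_\jmath}_\sharp\mu(I_\jmath)$. Each $h_\jmath$ is a weighted wedge of at most $k+1 \leq n+1$ distance functions, so $h_\jmath \in \Lambda_l(X) \subseteq \Lambda_n(X)$, and each $I_\jmath=(-\infty,r_{j_0})$ is an interval. The weights $r_{j_0}/r_{j_m}$ lie in $I$ because the radii are ordered, so these functions genuinely belong to $\Lambda_n(X)$ as defined in \eqref{E:nchain}.

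Substituting these expansions into the truncated inclusion--exclusion formula writes $\mu(\cup_i B(a_i,r_i))$ as a finite sum of terms $\pm \phi_{j\sharp}\mu(I_j)$ with $\phi_j \in \Lambda_n(X)$. The signs are products of $(-1)^k$ and $(-1)^l$, hence $\pm 1$, and the number of terms is some finite $n_0$. Nothing in this construction depends on $\mu$: the functions and intervals are determined by the balls alone, and the emptiness hypothesis is a statement about sets. The identity therefore holds for every $\mu \in \borel{X}$ simultaneously.

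The main point needing care is the truncation step. The hypothesis is phrased for distinct indices $i_0<\cdots<i_k$, which matches exactly the index sets appearing in inclusion--exclusion, so no issue arises from repeated balls; pairwise distinctness of the balls ensures the index combinatorics correspond to the sets themselves. One should also confirm that Lemma \ref{L:mint} applies to any subfamily of at most $n+1$ balls, which holds since it is stated for arbitrary finite families. If one preferred to avoid nesting the two expansions, a single-step alternative would be to observe that the truncated sum only involves unions and intersections of at most $n+1$ balls and invoke Lemma \ref{L:uniball} directly; the plan above is the more transparent route.
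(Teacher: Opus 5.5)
Your proposal is correct and matches the paper's proof: truncate the inclusion--exclusion expansion of the union using the emptiness hypothesis, then rewrite each surviving intersection of at most $n+1$ balls via Lemma~\ref{L:mint}, whose observables $h_\imath$ lie in $\Lambda_k(X)\subseteq\Lambda_n(X)$. Your extra checks, namely that the weights lie in $I$ once the balls are ordered by radius and that the construction does not depend on $\mu$, are details the paper leaves implicit.
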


\begin{proof}
Assume that the balls are ordered by non-decreasing radii. By the inclusion-exclusion principle,
\begin{equation} \label{E:incexc}
\begin{split}
\mu\big(\cup_{i=0}^N B(a_i,r_i)\big) &=  
\sum_{\ell=0}^N \sum_{\imath \in I_\ell}  (-1)^\ell \mu \big(B(a_{i_0}, r_{i_0}) \cap \cdots \cap (B(a_{i_\ell}, r_{i_\ell})\big) \\
&=  
\sum_{\ell=0}^n \sum_{\imath \in I_\ell} (-1)^\ell \mu \big(B(a_{i_0}, r_{i_0}) \cap \cdots \cap B(a_{i_\ell}, r_{i_\ell})\big),
\end{split}
\end{equation}
where the last equality holds because of the assumption on empty intersections of order larger than $n+1$. By Lemma \ref{L:mint}, for each $\imath \in I_\ell$, $0 \leq \ell \leq n$, we can write
\begin{equation} \label{E:gint}
\mu \big(B(a_{i_0}, r_{i_0}) \cap \cdots \cap B(a_{i_\ell}, r_{i_\ell})\big) 
= \sum_{k=0}^\ell \sum_{\imath \in I_k} (-1)^k {h_\imath}_\sharp \mu (I_\imath).
\end{equation}
The claim follows from \eqref{E:incexc} and \eqref{E:gint}.
\end{proof}

This prepares us to prove the main result of this section.

\begin{theorem}[Stratified Injectivity] \label{T:ninjective}
Let $\mu, \nu \in P_n (X)$, $0 \leq n < \infty$. Then, $\otr{\mu}^n = \otr{\nu}^n$ if and only if $\mu=\nu$. 
\end{theorem}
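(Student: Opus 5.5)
The direction $\mu=\nu \Rightarrow \otr{\mu}^n=\otr{\nu}^n$ is immediate, so the plan concerns only the converse. Assume $\otr{\mu}^n=\otr{\nu}^n$ with $\mu,\nu\in P_n(X)$, so that $S_\mu=\text{supp}\,\mu$ and $S_\nu=\text{supp}\,\nu$ both have metric covering dimension $\le n$. Following the proof of Theorem \ref{T:injective}, I will show that $\mu(U)=\nu(U)$ for every open $U\subseteq X$. The extra difficulty is that only $\Lambda_n(X)$ is available, so the unions of arbitrarily many balls used in Proposition \ref{P:chain} must be replaced by unions of balls with controlled overlaps. Proposition \ref{P:union} then expresses their measure through pushforwards by elements of $\Lambda_n(X)$.

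\textbf{Step 1: handle the two supports separately.} Since both measures must be compared on a common family of balls, I first show $\mu=\nu$ on open sets through a symmetric argument. Set $S=S_\mu\cup S_\nu$, a closed set. I would like $\xi_X(S)\le n$; this is the subtle point, discussed in Step 3. Given $\xi_X(S)\le n$ and an open $U$, cover $S$ by the open cover $\{U, X\setminus \overline{V}\}$-type sets, or more simply refine the cover $\{U\}\cup\{X\setminus K\}$ appropriately. The cleanest route I would try is this. For each $m$, let $U_m=\{x\in U: d(x,U^c)>1/m\}$, so that $U_m\uparrow U$. Consider the open cover of $S$ by $U$ and $X\setminus\overline{U_m}$. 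By dimension $\le n$ it refines to a cover $\mathcal B$ of $S$ by balls of $X$ with no $n+2$ distinct balls sharing a point. Let $\mathcal B_U$ be the balls contained in $U$. Their union $W$ satisfies $U_m\cap S\subseteq \overline{U_m}\cap S\subseteq W\cap S$ and $W\subseteq U$. Since $\mu,\nu$ live on $S$, we get $\mu(U_m)\le\mu(W)\le\mu(U)$, and likewise for $\nu$.

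\textbf{Step 2: compute $\mu(W)$.} By separability, and since the refinement may be taken countable (Lindelöf property of $X$), write $\mathcal B_U=\{B_0,B_1,\dots\}$ and set $W_N=\bigcup_{i\le N}B_i$. Each finite family still has no $n+2$ distinct members with common intersection, so Proposition \ref{P:union} gives $\mu(W_N)=\sum_j\epsilon_j\phi_{j\sharp}\mu(I_j)$ with $\phi_j\in\Lambda_n(X)$, and the same formula for $\nu$. The hypothesis therefore yields $\mu(W_N)=\nu(W_N)$. Letting $N\to\infty$ gives $\mu(W)=\nu(W)$. Hence $|\mu(U)-\nu(U)|\le \mu(U\setminus U_m)+\nu(U\setminus U_m)\to0$, and regularity of Borel measures determined by open sets finishes the argument.

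\textbf{Step 3 (main obstacle): control the dimension of $S_\mu\cup S_\nu$.} The definition constrains only covers of each support separately, and the balls covering $S_\mu$ may intersect those covering $S_\nu$ in up to $2(n+1)$ at a time. My first attempt avoids the union. I would work on $S_\mu$ alone: the refinement above gives $W^\mu\supseteq \overline{U_m}\cap S_\mu$ and $\mu(W^\mu_N)=\nu(W^\mu_N)$. This gives $\mu(U_m)\le \mu(W^\mu)=\nu(W^\mu)\le \nu(U)$, so $\mu(U)\le\nu(U)$ for every open $U$; by symmetry, using $S_\nu$, $\nu(U)\le\mu(U)$. This one-sided comparison needs only the covering property of each support individually, and it is the version I would actually write up. The remaining care is in checking that refinements can be taken countable, so that the monotone limits are legitimate.
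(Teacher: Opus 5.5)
Your proof is correct, but it takes a different route from the paper's.

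\textbf{The paper's route.} It invokes Ulam's theorem (inner regularity by compact sets, which uses that $X$ is Polish) to reduce to compact $K\subseteq S_\mu$ or $K\subseteq S_\nu$. It checks that $\xi_X(K)\le n$ and refines the cover of $K$ by the balls $B(x,1/m)$, $x\in K$. By compactness it extracts a finite union $L_m$ with $K\subseteq L_m\subseteq K_m$, where $K_m$ is the open $1/m$-thickening, and concludes by continuity from above. Because this sandwich holds in all of $X$, it gives the two-sided equality $\mu(K)=\nu(K)$ directly.

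\textbf{Your route.} You test open sets instead and refine the two-element cover $\{U, X\setminus\overline{U_m}\}$ of $S_\mu$ itself. You replace compactness by a countable (Lindel\"of) subfamily together with continuity from below. Your sandwich $\overline{U_m}\cap S_\mu\subseteq W\subseteq U$ holds only modulo the support, which is why you get only the one-sided inequality $\mu(U)\le\nu(U)$.

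\textbf{What your route buys.} It avoids tightness, compactness, and the heredity of $\xi_X$ to compact subsets, so it works verbatim in any separable metric space. The countability issue you flag is harmless. You only need a countable subfamily of $\mathcal{B}_U$ with the same union $W$, and any subfamily inherits the condition that no $n+2$ distinct balls share a point.

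\textbf{The symmetric step is unnecessary.} For probability measures, $\mu(U)\le\nu(U)$ for every open $U$ already forces $\mu=\nu$. For closed $F$, passing to complements gives $\nu(F)\le\mu(F)$. Open thickenings $F_\epsilon\downarrow F$ give $\mu(F)=\lim\mu(F_\epsilon)\le\lim\nu(F_\epsilon)=\nu(F)$. So your argument actually proves a stronger statement: if $\mu\in\Omega_n(X)$ and $T^n_\mu=T^n_\nu$, then $\mu=\nu$ for arbitrary $\nu\in P(X)$. The same observation applies to the paper's proof, since $\mu(K)=\nu(K)$ for compact $K\subseteq S_\mu$ already yields $\mu\le\nu$ on all Borel sets.

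Finally, drop the union approach of Step 1 entirely in the write-up, as you essentially already do. Nothing in the hypotheses bounds $\xi_X(S_\mu\cup S_\nu)$.
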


\begin{proof}
If $\mu=\nu$, then it is clear that $\otr{\mu}^n = \otr{\nu}^n$. For the converse statement, let $S_\mu$ and $S_\nu$ be the supports of $\mu$ and $\nu$, respectively. Since $(X,d)$ is Polish, by a theorem of Ulam \cite[Theorem 7.1.4]{dudley2002}, any probability measure on $X$ is regular. Therefore, to prove that $\mu=\nu$, it suffices to show that $\mu(K) = \nu (K)$, for any compact set $K \subseteq S_\mu$ or $K \subseteq S_\nu$. Suppose $K \subseteq S_\mu$ with $K$ compact. It is simple to verify that $\mdim{X} (S_\mu) \leq n$ implies that $\mdim{X} (K) \leq n$. 

Given an integer $m>0$, consider the cover of $K$ by all open balls $B(x,1/m)$ with $x \in K$. We adopt the notation
\begin{equation}
K_m \coloneqq \bigcup_{x \in K} B(x,1/m)
\end{equation}
for the $1/m$-thickening of $K$. By assumption, this cover can be refined to a cover by open balls in $X$ with the property that any $k$-fold intersection of distinct elements of the cover is empty if $k>n+1$. Since $K$ is compact, we can assume that this cover is finite and denote its elements by $B (x_{mi}, r_{mi})$, where $x_{mi} \in X$, $r_{mi} >0$, and $0 \leq i \leq N_m$, for some $N_m >0$. Set
\begin{equation}
L_m \coloneqq \bigcup_{i=0}^{N_m} B(x_{mi}, r_{mi}).    
\end{equation}
By construction $K \subseteq L_m \subseteq K_m$, for any $m \geq 1$. By Proposition \ref{P:union}, there are functions $\phi_{mj} \in \Lambda_n (X)$ and intervals of the form $I_{mj} = (-\infty, r_{mj})$, $1 \leq j \leq N_m$, such that 
\begin{equation}
\eta (L_m) = \sum_{j=1}^{N_m} \epsilon_{mj} \phi_{mj\sharp} \eta (I_{mj}),    
\end{equation}
for any $\eta \in \borel{X}$, where $\epsilon_{mj}=\pm 1$. Thus, the assumption that $\otr{\mu}^n = \otr{\nu}^n$ implies that $\mu(L_m) = \nu (L_m)$, for any $m \geq 1$. Since $K = \cap_{m=1}^\infty K_m$, we have that
\begin{equation} 
\mu(K) = \lim_{m \to \infty} \mu (K_m) \quad \text{and} \quad \nu(K) = \lim_{m \to \infty} \nu (K_m). 
\end{equation}
Using the fact that $K \subseteq L_m \subseteq K_m$, $\forall m\geq 1$, we can conclude that
\begin{equation}
\mu (K) = \lim_{m \to \infty} \mu (K_m) = \lim_{m \to \infty} \mu (L_m) 
\quad \text{and} \quad  
\nu (K) = \lim_{m \to \infty} \nu (K_m) = \lim_{m \to \infty} \nu (L_m).
\end{equation}
Thus,
\begin{equation}
\mu(K) = \lim_{m \to \infty} \mu(L_m) = \lim_{m \to \infty} \nu(L_m) = \nu (K),
\end{equation}
as claimed. The case $K \subseteq S_\nu$ is similar.
\end{proof}

\begin{corollary} \label{C:oinjective}
Let $\mu,\nu \in \empiric{X}$. Then, $\otr{\mu}^0 = \otr{\nu}^0$ if and only if $\mu=\nu$.
\end{corollary}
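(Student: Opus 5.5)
Corollary \ref{C:oinjective} is a direct specialization of Theorem \ref{T:ninjective} to $n=0$. The forward direction ($\mu=\nu \Rightarrow \otr{\mu}^0=\otr{\nu}^0$) is immediate, so only the converse needs work. The plan is to check that finitely supported measures have $0$-dimensional support. Once this holds, both $\mu$ and $\nu$ lie in $\Omega_0(X)=P_0(X)$, and the theorem applies.

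First, for $\mu=\sum_{i=1}^m \lambda_i\delta_{x_i}\in\empiric{X}$ with all $\lambda_i>0$, we have $\text{supp}\,\mu=\{x_1,\ldots,x_m\}$, a finite and hence closed set. Next, we verify $\mdim{X}(S)=0$ for any finite $S$, as asserted in the example following the definition of metric covering dimension. Take any cover $\mathcal U$ of $S$ by open sets of $X$. For each $x_i\in S$, choose $U_i\in\mathcal U$ containing $x_i$. Then choose a radius
\[
r_i<\min\Bigl(\operatorname{dist}(x_i,U_i^c),\ \tfrac12\min_{j\ne i}d(x_i,x_j)\Bigr),
\]
where $\operatorname{dist}(x_i,U_i^c)$ is positive (or infinite) because $U_i$ is open. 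The balls $B(x_i,r_i)$ cover $S$ and refine $\mathcal U$. To see they are pairwise disjoint, pick one radius $r<\tfrac12\min_{j\ne i}d(x_i,x_j)$ for all balls, further shrunk below each $\operatorname{dist}(x_i,U_i^c)$. If $y\in B(x_i,r)\cap B(x_j,r)$ with $i\neq j$, then $d(x_i,x_j)<2r$, a contradiction. Hence no two distinct balls intersect, so the covering dimension by balls is $\le 0$. This gives $\empiric{X}\subseteq\Omega_0(X)=P_0(X)$, as recorded in Example \ref{EX:empiric}.

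Finally, apply Theorem \ref{T:ninjective} with $n=0$ to $\mu,\nu\in P_0(X)$: the equality $\otr{\mu}^0=\otr{\nu}^0$ forces $\mu=\nu$. The only potential subtlety is making the disjointness argument uniform, and using a common radius handles it. The degenerate case $m=1$ is trivial, since a single ball needs no disjointness condition.
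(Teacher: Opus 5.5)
Your proposal is correct and follows the paper's proof, which combines Theorem~\ref{T:ninjective} for $n=0$ with the inclusion $\empiric{X}\subseteq\Omega_0(X)=P_0(X)$; you verify that inclusion in more detail than the paper does. One small remark: the individually chosen radii $r_i$ already give pairwise disjoint balls, since $r_i+r_j<d(x_i,x_j)$ for $i\neq j$, so the switch to a common radius is unnecessary.
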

\begin{proof}
This follows from Theorem \ref{T:ninjective} and the fact that $\empiric{X} \subseteq \Omega_0 (X)$.
\end{proof}

\begin{corollary}
Suppose $(X,d)$ is a Polish metric space whose metric covering dimension satisfies $\mdim{X} \leq n$ and let $\mu, \nu \in \borel{X}$. Then, $\otr{\mu}^n = \otr{\nu}^n$ if and only if $\mu = \nu$.
\end{corollary}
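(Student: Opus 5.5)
The plan is to reduce the corollary to Theorem \ref{T:ninjective}. The direction $\mu=\nu \Rightarrow \otr{\mu}^n = \otr{\nu}^n$ is immediate. For the converse, the first step is to show that every $\mu \in \borel{X}$ lies in $\Omega_n(X)$. That is, for any closed $S \subseteq X$ (in particular $S = \text{supp}\,\mu$), we want $\mdim{X}(S) \leq \mdim{X} \leq n$.

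To get this monotonicity, take a cover $\mathcal{U}$ of $S$ by open sets of $X$. Enlarge it to a cover of $X$ by adjoining the open set $X \setminus S$. Since $\mdim{X} \leq n$, this cover refines to a cover of $X$ by open balls of $X$ with no $k$ distinct balls intersecting when $k > n+1$. Now keep only the balls that meet $S$. Each such ball sits inside some member of $\mathcal{U} \cup \{X\setminus S\}$, and it cannot sit inside $X \setminus S$, so it sits inside a member of $\mathcal{U}$. These retained balls still cover $S$ and still satisfy the intersection condition, since a subfamily inherits it. Hence $\mathcal{U}$ has the required ball refinement, and $\mdim{X}(S) \leq n$.

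With this in hand, both $\mu$ and $\nu$ lie in $\Omega_n(X)$, so each lies in some stratum $P_{m}(X)$ with $m \leq n$. The one subtle point is that Theorem \ref{T:ninjective} is stated for $P_n(X)$, i.e.\ exact dimension $n$, whereas here the dimensions of the supports may be smaller and may differ between $\mu$ and $\nu$. I would handle this by noting that the proof of Theorem \ref{T:ninjective} uses only the bound $\mdim{X}(S_\mu) \leq n$ and $\mdim{X}(S_\nu)\leq n$. Indeed, it produces covers $L_m$ with $(n+2)$-fold intersections empty and applies Proposition \ref{P:union} with $\Lambda_n(X)$. So that argument proves the statement for all $\mu,\nu \in \Omega_n(X)$ verbatim, in the same way Corollary \ref{C:oinjective} is obtained. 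Alternatively, one can restrict $\otr{\mu}^n$ to $\Lambda_m(X) \subseteq \Lambda_n(X)$, but that would not resolve the case where $\mu$ and $\nu$ have different dimensions. So I would rely on the $\Omega_n$ version of the theorem.

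I expect the main obstacle to be exactly this bookkeeping between $P_n$ and $\Omega_n$, together with verifying the ball-refinement restriction step. Beyond that, the conclusion $\mu=\nu$ follows directly.
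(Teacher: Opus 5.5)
Your proposal is correct and follows the paper's route: you show $\Omega_n(X)=\borel{X}$ because every support satisfies $\mdim{X}(S_\eta)\le n$, and then you invoke Theorem~\ref{T:ninjective}. You also supply two details the paper's one-line proof leaves unstated. First, you prove the monotonicity $\mdim{X}(S)\le \mdim{X}$ for closed $S$, which the paper only asserts. Second, you note correctly that Theorem~\ref{T:ninjective} is stated for $P_n(X)$ but its proof uses only the bound $\le n$, so it applies on all of $\Omega_n(X)$.
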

\begin{proof}
For any $\eta \in \borel{X}$, $\mdim{X} \leq n$ implies that $\mdim{X} (S_\eta) \leq n$, where $S_\eta$ is the support of $\eta$. Therefore, $\Omega_n (X) = \borel{X}$. Thus, if $\otr{\mu}^n = \otr{\nu}^n$, Theorem \ref{T:ninjective} guarantees that $\mu=\nu$. The converse statement is trivial.   
\end{proof}

\begin{remark}
Theorem \ref{T:ninjective} also holds with the set of observables $\Lambda_n (X)$ in the definition of $\otr{\mu}^n$ replaced with $\hat{\Lambda}_n (X) \subseteq \obs{X}$, $n \geq 0$, consisting of observables of the form $g_a^\alpha \coloneqq \alpha_0 f_{a_0} \vee \cdots \vee \alpha_n f_{a_n}$, 
where $f\vee g$ denotes the maximum of $f$ and $g$. 
\end{remark}

\section{The Observable Wasserstein Distance} \label{S:odistances}

In this section, we develop various (pseudo) metrics for probability measures on a Polish space $(X,d)$, based on the observable transforms described above. To show that the definitions are well-posed, we invoke the following standard result. A proof appears, e.g., in~\cite[Lemma 4.11]{gomez2025metrics}.

\begin{lemma} \label{L:bound}
If $\mu, \nu \in \wasp{X}$ and $f \in \obs{X}$, then $w_p (f_\sharp\mu, f_\sharp \nu) \leq w_p(\mu, \nu)$.
\end{lemma}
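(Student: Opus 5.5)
The plan is a direct coupling argument, mirroring the computation in the proof of Proposition \ref{P:liptr}. The key observation is that pushing a coupling of $\mu$ and $\nu$ forward along $f\times f$ yields a coupling of $f_\sharp\mu$ and $f_\sharp\nu$. We check well-posedness first: by the moment computation following the definition of the Lipschitz transform, $f_\sharp\mu, f_\sharp\nu \in \wasp{\real}$, so the left-hand side is finite and defined.

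Fix any coupling $h \in \Gamma(\mu,\nu)$ and set $F = f\times f \colon X\times X \to \real\times\real$, $F(x,y) = (f(x),f(y))$. This map is continuous, hence Borel. Let $\tilde h \coloneqq F_\sharp h$. Since $\pi_1\circ F = f\circ \pi_1$, we get ${\pi_1}_\sharp \tilde h = f_\sharp {\pi_1}_\sharp h = f_\sharp \mu$, and similarly ${\pi_2}_\sharp\tilde h = f_\sharp\nu$. Thus $\tilde h \in \Gamma(f_\sharp\mu, f_\sharp\nu)$.

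By the change of variables formula and the $1$-Lipschitz property,
\[
w_p^p(f_\sharp\mu,f_\sharp\nu) \leq \int_{\real\times\real}|s-t|^p\, d\tilde h(s,t) = \int_{X\times X}|f(x)-f(y)|^p\, dh(x,y) \leq \int_{X\times X} d^p(x,y)\, dh(x,y).
\]
Taking the infimum over $h \in \Gamma(\mu,\nu)$ and then $p$-th roots gives the claim. The integrals are finite because $\mu,\nu\in\wasp{X}$, for instance via $d^p(x,y)\le 2^{p-1}(d^p(x,x_0)+d^p(y,x_0))$.

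There is no real obstacle here. The only points needing care are the measurability of $F$ and the marginal identity, and both are immediate. In particular, the argument does not need the existence of an optimal coupling.
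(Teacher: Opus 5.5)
Your proof is correct: pushing an arbitrary coupling $h\in\Gamma(\mu,\nu)$ forward along $f\times f$ gives a coupling of $f_\sharp\mu$ and $f_\sharp\nu$. Applying $|f(x)-f(y)|\le d(x,y)$ and then taking the infimum over $h$ yields the claim, and this is the standard argument. The paper gives no proof of its own, only a citation (\cite[Lemma 4.11]{gomez2025metrics}), and your route is the same pushforward-coupling computation the paper uses in its proof of Proposition~\ref{P:liptr}.
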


\begin{definition}
Let $\mu, \nu \in \wasp{X}$, $p \geq 1$.
\begin{enumerate}[label=\rm{(\roman*)}]
\item The {\em observable Wasserstein $p$-distance} between $\mu$ and $\nu$ is defined as
\[
\owp (\mu, \nu) \coloneqq \sup_{f \in \obs{X}} w_p (\otr{\mu}(f), \otr{\nu} (f)) 
= \sup_{f \in \obs{X}} w_p (f_\sharp\mu, f_\sharp \nu).
\]
\item By restricting $\owp$ to $\Lambda_n (X)$, $0 \leq n \leq \infty$, define 
\[
\owpn (\mu, \nu) \coloneqq \sup_{f \in \Lambda_n(X)} w_p (\otr{\mu}^n(f), \otr{\nu}^n (f)) 
= \sup_{f \in \Lambda_n (X)} w_p (f_\sharp \mu , f_\sharp \nu).
\]
\end{enumerate}
\end{definition}
The finiteness of $\owp (\mu, \nu)$ and $\owpn (\mu, \nu)$ follows from Lemma \ref{L:bound} because $w_p (\mu, \nu)<\infty$. The distance functions $\owp \colon \wasp{X} \times \wasp{X} \to \real$ and $\owpn \colon \wasp{X} \times \wasp{X} \to \real$, $0 \leq n \leq \infty$, clearly define pseudo-metrics on $\wasp{X}$, as $w_p$ is a metric on $\wasp{\real}$. We refine the properties of these distance functions in the following proposition.

\begin{proposition} \label{P:distance}
Let $\Omega_{p,n}(X) = \Omega_n (X)\cap\wasp{X}$, where $p \geq 1$ and $0 \leq n \leq \infty$. The following statements hold:
\begin{enumerate}[label=\rm{(\roman*)}]
\item $\owp \colon \wasp{X} \times \wasp{X} \to \real$ is a metric;
\item the restriction of $\owpn$ to $\Omega_{p,n}(X)$, $0 \leq n \leq \infty$, defines a metric $\owpn \colon \Omega_{p,n}(X)  \times \Omega_{p,n}(X)  \to \real$;
\item if $0\leq m \leq n \leq \infty$, then
\[
\theta_{p,m} (\mu,\nu) \leq \owpn (\mu,\nu) \leq \owp (\mu,\nu) \leq w_p (\mu,\nu),
\]
for any $\mu, \nu \in \wasp{X}$.
\end{enumerate}
\end{proposition}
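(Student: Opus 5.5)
The plan is to establish (iii) first, since the pseudo-metric facts are already recorded. Then (i) and (ii) reduce to showing that a vanishing distance forces equality of measures, via the injectivity theorems.

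For (iii), the chain $\Lambda_m(X) \subseteq \Lambda_n(X) \subseteq \obs{X}$ for $m \leq n \leq \infty$ gives the first two inequalities immediately, because the suprema are taken over nested index sets. To be careful, the definition gives $\Lambda_\infty(X) = \bigcup_{n \geq 1}\Lambda_n(X)$, and this contains $\Lambda_0(X)$ because $\Lambda_0(X)\subseteq\Lambda_1(X)$. The last inequality $\owp(\mu,\nu) \leq w_p(\mu,\nu)$ follows by taking the supremum over $f \in \obs{X}$ in Lemma \ref{L:bound}.

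For (i) and (ii), symmetry, nonnegativity, the triangle inequality and finiteness are inherited from $w_p$ on $\wasp{\real}$, as already noted before the proposition. So the only remaining point is definiteness.

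For (ii), suppose $\mu,\nu \in \Omega_{p,n}(X)$ with $\owpn(\mu,\nu)=0$. Then $w_p(f_\sharp\mu, f_\sharp\nu)=0$ for every $f\in\Lambda_n(X)$. Since $w_p$ is a metric on $\wasp{\real}$, this gives $\otr{\mu}^n = \otr{\nu}^n$.

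If $n=\infty$, Theorem \ref{T:injective} yields $\mu=\nu$.

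If $n<\infty$, I want to invoke Theorem \ref{T:ninjective}, but it is stated for the stratum $P_n(X)$, not for $\Omega_n(X)$. This is the main (though mild) obstacle. The proof of Theorem \ref{T:ninjective} only uses the hypothesis $\mdim{X}(\mathrm{supp}\,\mu)\leq n$ and $\mdim{X}(\mathrm{supp}\,\nu)\leq n$. With that hypothesis, every compact $K$ contained in either support has $\mdim{X}(K)\le n$, and the refined cover has no $(n+2)$-fold intersections, so Proposition \ref{P:union} applies with observables in $\Lambda_n(X)$. The argument therefore runs verbatim for $\mu,\nu\in\Omega_n(X)$.

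Alternatively, one can reduce to the strata directly. Say $\mu\in P_k(X)$ and $\nu\in P_l(X)$ with $k,l\leq n$. Equality of the transforms on $\Lambda_n(X)$ implies equality on $\Lambda_{\max(k,l)}(X)\subseteq \Lambda_n(X)$. The proof of Theorem \ref{T:ninjective} then applies at level $\max(k,l)$, because both supports have dimension at most $\max(k,l)$. I will present the first route, remarking that the proof of Theorem \ref{T:ninjective} only used this dimension bound.

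For (i), if $\owp(\mu,\nu)=0$, then by (iii) we get $\theta_{p,\infty}(\mu,\nu)=0$. Since $\Omega_{p,\infty}(X) = \wasp{X}$, part (ii) with $n=\infty$ (that is, Theorem \ref{T:injective}) gives $\mu=\nu$. Alternatively, $\owp(\mu,\nu)=0$ gives $\otr{\mu}=\otr{\nu}$ directly, and the second statement of Theorem \ref{T:injective} applies.
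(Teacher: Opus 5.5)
Your proposal is correct and follows the same route as the paper. You get (iii) from the nesting $\Lambda_m(X)\subseteq\Lambda_n(X)\subseteq\obs{X}$ together with Lemma \ref{L:bound}, and definiteness in (i)--(ii) from Theorems \ref{T:injective} and \ref{T:ninjective}. You also point out that Theorem \ref{T:ninjective} is stated only for the stratum $P_n(X)$, but its proof uses only the bound $\mdim{X}(\mathrm{supp}\,\mu),\mdim{X}(\mathrm{supp}\,\nu)\le n$, so it applies on all of $\Omega_n(X)$; this fills a step the paper's one-line citation passes over.
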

\begin{proof}
Theorem \ref{T:injective} implies that $\owp$ and $\theta_{p,\infty}$ are metrics, and the fact that $\owpn$, $0 \leq n <\infty$, is also a metric follows from Theorem \ref{T:ninjective}. The inequalities $\theta_{p,m} (\mu,\nu) \leq \owpn (\mu,\nu) \leq \owp (\mu,\nu)$ in (iii) follow from the fact that  $\Lambda_m (X) \subseteq \Lambda_n (X) \subseteq \obs{X}$. It remains to show that $\owp (\mu,\nu) \leq w_p (\mu,\nu)$. By Lemma \ref{L:bound}, $w_p (f_\sharp \mu, f_\sharp \nu) \leq w_p (\mu, \nu)$, for any $f \in \obs{X}$. Thus, the inequality carries over to the supremum over $f \in \obs{X}$; that is, $\owp (\mu,\nu) \leq w_p (\mu,\nu)$.
\end{proof}

\begin{remark}\label{rmk:LpVersion}
For spaces such as a compact Riemannian manifold $(M,g)$ equipped with the geodesic distance, we can define other versions of $\theta_{p,0}$ by averaging over $M$ using the volume measure instead of taking the supremum over $ \Lambda_0 (X)$. For example, for $p,q\geq 1$, we can define
\begin{equation}
\dowpq (\mu,\nu) = \Big(\int_M w_p^q ({f_x}_\sharp \mu, {f_x}_\sharp \nu) d\,Vol (x)\Big)^{1/q}.
\end{equation}
A version of this form is implemented numerically in Section~\ref{sec:autoencoder}. For the sake of keeping the exposition simple, we will continue to focus on the supremum version when developing theory.
\end{remark}


For $p=1$, the Kantorovich-Rubinstein duality in optimal transport implies that the inequality $\theta_1(\mu,\nu) \leq w_1(\mu, \nu)$ in Proposition \ref{P:distance} can be strengthened to an equality.

\begin{proposition}\label{prop:equivalence_p_equals_1}
    For $\mu,\nu \in W_1(X)$, $\theta_1(\mu,\nu) = w_1(\mu,\nu)$.
\end{proposition}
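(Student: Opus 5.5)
One inequality is already available: Proposition \ref{P:distance}(iii) gives $\theta_1(\mu,\nu) \leq w_1(\mu,\nu)$. So the plan is to prove the reverse inequality $w_1(\mu,\nu) \leq \theta_1(\mu,\nu)$, and I will do it with Kantorovich--Rubinstein duality. For $\mu,\nu \in W_1(X)$ on a Polish space,
\[
w_1(\mu,\nu) = \sup_{f \in \obs{X}} \Big( \int_X f\, d\mu - \int_X f\, d\nu \Big),
\]
where the integrals are finite because $f$ is 1-Lipschitz and both measures have finite first moments. The task thus reduces to showing that, for each fixed observable $f$, the difference of expectations $\int f\,d\mu - \int f\,d\nu$ is bounded by $w_1(f_\sharp\mu, f_\sharp\nu)$.

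Fix $f \in \obs{X}$. By the change of variables formula for pushforwards, $\int_X f\,d\mu = \int_\real t\, d(f_\sharp\mu)(t)$, and likewise for $\nu$. Both pushforwards lie in $W_1(\real)$, as shown just after the definition of the Lipschitz transform.

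Now take any coupling $h \in \Gamma(f_\sharp\mu, f_\sharp\nu)$. Then
\[
\int_\real t\, d(f_\sharp\mu) - \int_\real t\, d(f_\sharp\nu) = \int_{\real\times\real} (s-t)\, dh(s,t) \leq \int_{\real\times\real} |s-t|\, dh(s,t).
\]
Taking the infimum over couplings gives $\int f\,d\mu - \int f\,d\nu \leq w_1(f_\sharp\mu, f_\sharp\nu) \leq \theta_1(\mu,\nu)$. Taking the supremum over $f$ and applying the duality formula then yields $w_1(\mu,\nu) \leq \theta_1(\mu,\nu)$. Combined with Proposition \ref{P:distance}(iii), this gives equality.

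Once duality is granted, the argument is essentially a two-line computation. The main point needing care is citing the correct form of Kantorovich--Rubinstein duality: for Polish spaces with finite first moments, the supremum may be taken over all 1-Lipschitz functions, bounded or not, as in \cite{villani2009}. The other point to check is integrability: the marginal identity $\int (s-t)\,dh = \int s\,dh - \int t\,dh$ is justified because the first moments of $f_\sharp\mu$ and $f_\sharp\nu$ are finite, by the moment estimate in Section \ref{S:lip}.
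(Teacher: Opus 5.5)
Your proof is correct and follows essentially the same route as the paper: Kantorovich--Rubinstein duality on $X$, together with the observation that each observable $f$ can be written as $\mathrm{id}\circ f$, where the identity is a 1-Lipschitz test function on $\real$. The only difference is that the paper also applies duality on $\real$ and rewrites $\theta_1$ as a supremum over compositions $g\circ f$, which gives both inequalities at once; you instead obtain $w_1 \le \theta_1$ from the elementary coupling bound and take the reverse inequality from Proposition \ref{P:distance}(iii).
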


\begin{proof}
    By Kantorovich-Rubinstein duality (cf.~\cite[Theorem 11.8.2]{dudley2002}, \cite[Theorem 5.10 and Remark 6.5]{villani2009}), we can write 
    \[
    w_1(\mu,\nu) = \sup_{f \in \obs{X}} \int_X f d\mu - \int_X f d\nu.
    \]
    Now consider $\theta_1$. We have 
\begin{align*}
    \theta_1(\mu,\nu) &= \sup_{f \in \obs{X}} w_1(f_\# \mu, f_\# \nu)
    = \sup_{f \in \obs{X}} \sup_{g \in \obs{\real}} \int_\mathbb{R} g d f_\# \mu - \int_\mathbb{R} g d f_\# \nu \\
    &= \sup_{f \in \obs{X}} \sup_{g \in \obs{\real}} \int_X g \circ f d \mu  - \int_X g \circ f d \nu = \sup_{h \in \obs{X}} \int_X h d \mu  - \int_X h d \nu 
    = w_1(\mu,\nu),
\end{align*}
where we make the change of variables $h = g \circ f$ in the penultimate equality. 
\end{proof}

\begin{corollary}\label{cor:topologically_equivalent}
    For a compact space $X$, the metrics $\theta_p$ and $w_p$ are topologically equivalent for all $p \in [1,\infty)$.
\end{corollary}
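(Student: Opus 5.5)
Since $X$ is compact, every measure in $\borel{X}$ has finite moments of all orders, so $\wasp{X} = \borel{X}$. Moreover, $w_p$ metrizes weak convergence on $\borel{X}$, and $(\borel{X}, w_p)$ is a compact metric space. I will prove that the identity map $\mathrm{id} \colon (\borel{X}, w_p) \to (\borel{X}, \owp)$ is a homeomorphism.

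\textbf{Continuity of the identity.} By Proposition \ref{P:distance}(iii), $\owp \leq w_p$. Hence the identity from $(\borel{X}, w_p)$ to $(\borel{X}, \owp)$ is 1-Lipschitz, and in particular continuous.

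\textbf{Continuity of the inverse.} By Proposition \ref{P:distance}(i), $\owp$ is a genuine metric, so $(\borel{X}, \owp)$ is Hausdorff. A continuous bijection from a compact space onto a Hausdorff space is a homeomorphism. Therefore the identity has a continuous inverse, and the two metrics induce the same topology.

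\textbf{Alternative quantitative route.} If one prefers to avoid the compactness argument, one can compare the metrics directly. For $p \geq 1$ and $D = \mathrm{diam}(X)$, we have $w_1 \leq w_p$, and for any $\mu, \nu \in \borel{X}$,
\[
w_p^p(\mu,\nu) \leq D^{p-1} w_1(\mu,\nu).
\]
This holds because $d^p \leq D^{p-1} d$, so one may integrate against an optimal coupling for $w_1$. By Proposition \ref{prop:equivalence_p_equals_1}, $w_1 = \theta_1$. Also, $\theta_1 \leq \owp$: for each $f$, Jensen's inequality on $\real$ gives $w_1(f_\sharp\mu, f_\sharp\nu) \leq w_p(f_\sharp\mu, f_\sharp\nu)$, and one then takes the supremum over $f$. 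Combining these bounds,
\[
\owp \leq w_p \leq D^{(p-1)/p}\, \owp^{1/p},
\]
which gives topological (in fact uniform) equivalence.

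The only point requiring care is justifying that $(\borel{X}, w_p)$ is compact and that $w_p$ induces the weak topology when $X$ is compact. These are standard facts (Prokhorov's theorem together with \cite{villani2009}). Alternatively, the quantitative route above sidesteps them entirely, so I would present that route as the main proof.
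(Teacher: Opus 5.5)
Your proof is correct, and both of your routes differ from the paper's in useful ways. The paper first shows that all the $\theta_p$ are mutually equivalent: it applies the diameter comparison $w_q^q\le D^{q-p}w_p^p$ to every pushforward $f_\sharp\mu$, using that $f(X)$ has diameter at most $D$. It then chains $\theta_p\sim\theta_1=w_1\sim w_p$, citing the equivalence of the $w_p$ on compact spaces for the last link. Your quantitative route uses the same two ingredients, namely Proposition~\ref{prop:equivalence_p_equals_1} and the diameter comparison. However, you apply the comparison only once, on $X$ itself, and close the loop with the trivial bounds $\theta_1\le\theta_p\le w_p$. This makes the pushforward step unnecessary and gives the explicit sandwich $\theta_p\le w_p\le D^{1-1/p}\theta_p^{1/p}$ relating the two metrics in question. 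Your compactness route is genuinely different, since it never uses Kantorovich--Rubinstein duality. It needs only three facts: $\theta_p\le w_p$; $\theta_p$ separates points (Theorem~\ref{T:injective}, via Proposition~\ref{P:distance}(i)); and $(P(X),w_p)$ is compact. It gives no modulus of continuity, but it is more general. The same argument shows that on a compact $X$ the metric $\theta_{p,\infty}$ is also topologically equivalent to $w_p$. So is $\theta_{p,n}$ whenever $\xi_X\le n$, which is a metric on all of $P(X)$ by Proposition~\ref{P:distance}(ii). The duality-based argument does not give these cases, because it hinges on the identity $\theta_1=w_1$ for the full class $\Lambda(X)$.
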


\begin{proof}
    When $X$ is compact, the Wasserstein metrics $w_p$ are topologically equivalent to each other for all $p \in [1,\infty)$---see \cite[Corollary 6.13]{villani2009}. In particular, it is not hard to show that, for $1 \leq p \leq q < \infty$,
    \[
    w_q^q \leq D^{q-p} w_p^p,
    \]
    where $D$ is the diameter of $X$. For any $f \in \Lambda(X)$, we therefore have
    \[
    w_q(T_\mu (f), T_\nu (f))^q \leq D^{q-p} w_p(T_\mu(f),T_\nu(f))^p,
    \]
    where we consider $T_\mu(f)$ and $T_\nu(f)$ as measures on the compact image of $f$, which has diameter at most $D$, since $f$ is 1-Lipschitz. Since $f$ was arbitrary, we have 
    \[
    \theta_p(\mu,\nu) \leq \theta_q(\mu,\nu) \leq D^{1-p/q}\theta_p(\mu,\nu)^{p/q},
    \]
    as $\theta_p \leq \theta_q$ clearly holds. This proves topological equivalence of all $\theta_p$, so the 
    main claim follows from Proposition \ref{prop:equivalence_p_equals_1}. 
\end{proof}


\section{Discrete Model for Observable Distances}

This section develops a discrete model for measures supported on a fixed finite grid $A \subseteq X$ and provides the theoretical underpinning for its convergence properties.

\begin{definition} Let $(X,d)$ be a metric space and $\delta>0$. A subset $A \subseteq X$ is a {\em $\delta$-cover} of $X$ if $\bigcup_{a \in A} B(a, \delta) =X$. Equivalently, for any $x \in X$, there is $a \in A$ such that $d(x,a)<\delta$.
\end{definition}

Finite $\delta$-covers always exist if $X$ is compact. Indeed, the open cover of $X$ by all open balls $B(x,\delta)$, $x \in X$, admits a finite subcover $B(a_i,\delta)$, $1\leq i \leq n$. Thus, $A =\{a_1, \ldots, a_n\} \subseteq X$ is a $\delta$-cover.

\begin{lemma} \label{L:empiric}
If $A =\{a_1, \ldots, a_n\} \subseteq X$ is a finite $\delta$-cover of $X$, $\delta>0$, then there exists a Borel measurable map $p \colon X \rightarrow X$ such that $p(x) \in A$ and $d(x,p(x))< \delta$, $\forall x \in X$.
\end{lemma}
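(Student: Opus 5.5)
The plan is to build the map from a disjoint Borel partition of $X$ indexed by the anchor points, using a first-hit rule. For $1 \leq i \leq n$ set
\[
C_1 \coloneqq B(a_1,\delta), \qquad C_i \coloneqq B(a_i,\delta) \setminus \bigcup_{j<i} B(a_j,\delta), \quad 2 \leq i \leq n,
\]
and define $p(x) \coloneqq a_i$ for $x \in C_i$. In words, $p$ sends $x$ to the first anchor point, in the fixed enumeration of $A$, whose open $\delta$-ball contains $x$.

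The steps to check, in order, are these.

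\begin{enumerate}[label=\rm{(\arabic*)}]
\item Each $C_i$ is Borel. It is a finite Boolean combination of open balls, and open balls are open, hence Borel.
\item The sets $C_i$ are pairwise disjoint by construction.
\item They cover $X$. Since $A$ is a $\delta$-cover, every $x$ lies in some $B(a_i,\delta)$; taking the least such index $i$ puts $x$ in $C_i$.
\item So $p$ is a well-defined function with $p(x) \in A$. For $x \in C_i \subseteq B(a_i,\delta)$ we get $d(x,p(x)) = d(x,a_i) < \delta$.
\end{enumerate}

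Measurability is then immediate. For any Borel set $U \subseteq X$,
\[
p^{-1}(U) = \bigcup_{i \colon a_i \in U} C_i,
\]
which is a finite union of Borel sets. Equivalently, $p$ is a simple function $\sum_i a_i \mathbb{1}_{C_i}$ taking finitely many values on Borel pieces.

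There is no real obstacle here. The only point needing care is that the naive choice, a nearest anchor point, can be ambiguous when there are ties. The lexicographic first-hit rule above resolves every tie deterministically while keeping all pieces Borel.

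If one instead prefers nearest-point projection, for example to match the weighted Voronoi cells in Figure~\ref{fig:voronoi}, the same device works. Use the cells $\{x \colon d(x,a_i) \le d(x,a_j)\ \forall j\}$, which are closed, and break ties by the least index; the strict inequality $d(x,p(x))<\delta$ still holds because the nearest anchor point is at least as close as any covering one.
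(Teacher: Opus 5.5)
Your proposal is correct and matches the paper's proof essentially verbatim. The paper uses the same first-hit partition $S_1 = B(a_1,\delta)$, $S_i = B(a_i,\delta)\setminus\bigcup_{j<i}S_j$, which coincides with your $C_i$ since $\bigcup_{j<i}S_j=\bigcup_{j<i}B(a_j,\delta)$; it sends $S_i$ to $a_i$ and gets measurability because preimages are unions of partition cells, exactly as you do (your nearest-point variant with least-index tie-breaking is also valid but unnecessary).
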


\begin{proof}
The argument is standard (cf.\,\cite[Lemma 2.2]{clement2008}). Starting with the finite open cover of $X$ given by the balls $B(a_i,\delta)$, $1 \leq i \leq n$, we construct a partition $X = S_1 \sqcup \cdots \sqcup S_n$, where each $S_i$ is Borel measurable and $S_i \subseteq B(a_i,\delta)$.  Set $S_1 = B(a_1, \delta)$. Inductively, assuming that $S_1, \ldots, S_{i-1}$ have been constructed, let $S_i = B(a_i, \delta) \setminus \bigcup_{j<i} S_j$. Clearly, $S_i \cap S_j = \emptyset$ if $i\ne j$, and $X = \bigsqcup_{i=1}^n S_i$.

Define $q \colon X \to A$ by $q(x) = a_i$ if $x \in S_i$. Since $S_i \subseteq B(a_i,\delta)$, we have that $d(x, q(x)) < \delta$, for any $x \in X$. Moreover, the pre-image under $q$ of any subset of $A$ is measurable because it is a union of elements of the partition $S_1, \ldots, S_n$. Then, the mapping $p \colon X \to X$ given by $p = \imath_A \circ q$ has the desired properties, where $\imath_A \colon A \hookrightarrow X$ is the inclusion map.
\end{proof}

\begin{proposition} \label{P:discrete}
Let $A = \{a_1, \ldots, a_n\} \subseteq X$ be a finite $\delta$-cover of $X$, $\delta>0$, and $p \geq 1$. Given $\mu \in W_p(X)$, there exists a Borel probability measure $\hat{\mu}$ whose support is contained in $A$ such that $w_p (\mu, \hat{\mu})< \delta$.
\end{proposition}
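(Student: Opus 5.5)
The plan is to take $\hat{\mu} \coloneqq p_\sharp \mu$, where $p \colon X \to X$ is the Borel measurable map from Lemma \ref{L:empiric}. It satisfies $p(x) \in A$ and $d(x,p(x)) < \delta$ for every $x \in X$. (To avoid a clash with the exponent $p$, I would rename this map $\pi$ in the write-up.)

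\emph{Support and moments.} Since $\pi$ takes values in the finite set $A$, we have $\hat{\mu}(X \setminus A) = \mu(\pi^{-1}(X\setminus A)) = 0$. Hence $\hat{\mu}$ is a finitely supported probability measure with support contained in $A$. It automatically lies in $\wasp{X}$, so $w_p(\mu,\hat{\mu})$ is defined.

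\emph{Coupling.} Mimicking the proof of Proposition \ref{P:liptr}, I would use the coupling $h \coloneqq (\mathrm{id}, \pi)_\sharp \mu \in \borel{X \times X}$. Its first marginal is $\mu$ and its second is $\pi_\sharp\mu = \hat{\mu}$, so $h \in \Gamma(\mu,\hat{\mu})$. Then
\[
w_p^p(\mu,\hat{\mu}) \leq \int_{X\times X} d^p(x,y)\, dh(x,y) = \int_X d^p(x,\pi(x))\, d\mu(x).
\]

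\emph{Strict inequality.} This is the only delicate point. The integrand satisfies $d^p(x,\pi(x)) < \delta^p$ pointwise, but a pointwise strict bound only gives $\le \delta^p$ after integrating unless more is said. Since $\mu$ is a probability measure, the function $\delta^p - d^p(x,\pi(x))$ is strictly positive everywhere. A nonnegative measurable function with integral zero vanishes $\mu$-a.e., which is impossible here because $\mu(X) = 1 > 0$. Therefore $\int_X (\delta^p - d^p(x,\pi(x)))\, d\mu > 0$, which gives $\int_X d^p(x,\pi(x))\,d\mu < \delta^p$ and hence $w_p(\mu,\hat{\mu}) < \delta$.

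As an alternative that avoids this argument, I could apply Lemma \ref{L:empiric} with a slightly smaller radius. That is available when $A$ is also a $\delta'$-cover for some $\delta' < \delta$, which holds for compact $X$ because the continuous function $x \mapsto \min_i d(x,a_i)$ attains its maximum, and that maximum is $< \delta$. The integral argument is cleaner and does not need compactness, so I would use it.
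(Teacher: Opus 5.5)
Your proposal is correct and follows the paper's proof essentially verbatim: you take $\hat{\mu}$ to be the pushforward of $\mu$ under the measurable map of Lemma \ref{L:empiric}, and you use the coupling $(\mathrm{id},\pi)_\sharp\mu$ to bound $w_p^p(\mu,\hat{\mu})$ by $\int_X d^p(x,\pi(x))\,d\mu(x)$. Your explicit justification of the strict inequality (a strictly positive measurable integrand has positive integral against a probability measure) fills in a step the paper takes for granted, and renaming the map to avoid the clash with the exponent $p$ is a sensible touch.
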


\begin{proof}
By Lemma \ref{L:empiric}, there exists a measurable map $p \colon X \rightarrow X$ such that $p(X) \subseteq A$ and $d(x,p(x))< \delta$, $\forall x \in X$. Let $\hat{\mu}= p_\sharp \mu$. By construction, the support of $\hat{\mu}$ is contained in $A$. To verify that $w_p(\mu,\hat{\mu}) < \delta$, let
$\varphi \colon X \to X \times X$ be given by $\varphi(x)=(x, p(x))$. Define $h= \varphi_\sharp \mu$, which gives a coupling between $\mu$ and $\hat{\mu}$. Indeed, let $\pi_1, \pi_2 \colon X \times X \to X$ denote the projections onto the first and second components, respectively. Then, $\pi_1 \circ \varphi = 1_X$ and $\pi_2 \circ \varphi = p$ so that
\begin{equation}
{\pi_1}_\sharp h = (\pi_1 \circ \varphi)_\sharp \mu = (1_X)_\sharp \mu = \mu
\quad \text{and} \quad
{\pi_2}_\sharp h = (\pi_2 \circ \varphi)_\sharp \mu = p_\sharp \mu = \hat{\mu},
\end{equation}
showing that $h \in \Gamma(\mu,\hat{\mu})$.
Therefore,
\begin{equation}
w_p (\mu, \hat{\mu}) \leq  
 \Big(\int_{X \times X} d^p(x,y) dh(x,y)\Big)^{1/p} = \Big(\int_{X} d^p(x,p(x)) d \mu (x)\Big)^{1/p} < \delta \Big(\int_{X} d \mu (x)\Big)^{1/p} = \delta,
\end{equation}
as claimed.
\end{proof}

\begin{corollary} \label{C:approx}
Let $0 \leq n < \infty$. Given $\mu,\nu \in W_p (X)$ and a finite $\delta$-cover $A \subseteq X$, $\delta>0$, there exist probability measures $\hat{\mu}, \hat{\nu} \in \borel{X}$ such that the supports of $\hat{\mu}$ and $\hat{\nu}$ are contained in $A$, $\theta_{p,n}(\mu,\hat{\mu})<\delta$, $\theta_{p,n}(\nu,\hat{\nu}) < \delta$, and $|\theta_{p,n} (\mu,\nu) - \theta_{p,n}(\hat{\mu},\hat{\nu})| \leq 2 \delta$.
\end{corollary}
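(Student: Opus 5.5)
The plan is to take $\hat{\mu}$ and $\hat{\nu}$ to be the discretizations supplied by Proposition \ref{P:discrete}. Apply that proposition to $\mu$ and to $\nu$ with the same finite $\delta$-cover $A$. This gives $\hat{\mu} = p_\sharp \mu$ and $\hat{\nu} = p_\sharp \nu$, with supports contained in $A$ and with $w_p(\mu,\hat{\mu})<\delta$ and $w_p(\nu,\hat{\nu})<\delta$. Both measures lie in $\wasp{X}$: they are finitely supported, or alternatively $w_p(\mu,\hat\mu)<\infty$ already places them there. Hence $\theta_{p,n}$ is defined on all pairs involved.

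Next I will transfer the bounds from $w_p$ to $\theta_{p,n}$. By Proposition \ref{P:distance}(iii), $\theta_{p,n}\le \theta_p \le w_p$ holds on $\wasp{X}\times\wasp{X}$. This gives directly $\theta_{p,n}(\mu,\hat{\mu}) \le w_p(\mu,\hat{\mu}) < \delta$, and likewise $\theta_{p,n}(\nu,\hat{\nu})<\delta$. If one prefers not to cite (iii), the same bound follows from Lemma \ref{L:bound} applied to each $f\in\Lambda_n(X)\subseteq \obs{X}$, followed by taking the supremum.

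For the final inequality I will use that $\theta_{p,n}$ is a pseudo-metric on $\wasp{X}$. Symmetry and the triangle inequality are inherited from $w_p$ on $\wasp{\real}$: for each $f$ we have $w_p(f_\sharp\mu,f_\sharp\nu)\le w_p(f_\sharp\mu,f_\sharp\hat\mu)+w_p(f_\sharp\hat\mu,f_\sharp\hat\nu)+w_p(f_\sharp\hat\nu,f_\sharp\nu)$, and then we take suprema. The triangle inequality then gives
\[
\theta_{p,n}(\mu,\nu) \le \theta_{p,n}(\mu,\hat{\mu}) + \theta_{p,n}(\hat{\mu},\hat{\nu}) + \theta_{p,n}(\hat{\nu},\nu) < \theta_{p,n}(\hat{\mu},\hat{\nu}) + 2\delta ,
\]
and the symmetric estimate with the roles of $(\mu,\nu)$ and $(\hat\mu,\hat\nu)$ exchanged gives the reverse bound. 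Together these yield $|\theta_{p,n}(\mu,\nu)-\theta_{p,n}(\hat\mu,\hat\nu)|\le 2\delta$, and in fact with strict inequality.

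There is no serious obstacle in this argument. The only points needing care are that all four measures lie in $\wasp{X}$, so that every supremum is finite by Lemma \ref{L:bound}, and that the triangle inequality for $\theta_{p,n}$ is justified pointwise in $f$ before taking suprema.
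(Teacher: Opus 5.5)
Your proposal is correct and follows essentially the same route as the paper: you discretize via Proposition \ref{P:discrete}, transfer the bounds using $\theta_{p,n}\le w_p$, and conclude with the triangle inequality for the pseudo-metric $\theta_{p,n}$. Your extra checks are details the paper leaves implicit: that $\hat{\mu},\hat{\nu}\in\wasp{X}$, and that the triangle inequality holds pointwise in $f$ before taking suprema.
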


\begin{proof}
By Proposition \ref{P:discrete}, there exist $\hat{\mu}, \hat{\nu} \in \borel{X}$ whose supports are contained in the $\delta$-cover $A$, $w_p (\mu, \hat{\mu})< \delta$ and $w_p (\nu, \hat{\nu})< \delta$. Since $\theta_{p,n} \leq w_p$, we have that
\begin{equation}
\begin{split}
\theta_{p,n}(\hat{\mu},\hat{\nu}) &\leq \theta_{p,n} (\hat{\mu},\mu) + \theta_{p,n}(\mu,\nu) + \theta_{p,n}(\nu, \hat{\nu}) \\
&\leq w_p(\hat{\mu},\mu) + \theta_{p,n}(\mu,\nu) + w_p(\nu,\hat{\nu}) \\
&\leq 2\delta + \theta_{p,n} (\mu,\nu).
\end{split}
\end{equation}
Similarly, $\theta_{p,n}(\mu,\nu) \leq 2\delta + \theta_{p,n}(\hat{\mu},\hat{\nu})$. Thus, $|\theta_{p,n}(\mu,\nu) - \theta_{p,n}(\hat{\mu},\hat{\nu})| \leq 2 \delta$.
\end{proof}

\begin{remark}
An identical argument shows that Corollary \ref{C:approx} also holds with $\theta_{p,n}$ replaced with $\theta_{p,\infty}$ or $\owp$.
\end{remark}

To obtain a fully discrete model, we also restrict the $n$-distance transform $\otr{\mu}^n \colon \Lambda_n (X) \to \borel{\real}$ to the subspace $\Lambda_n (X,A) \coloneqq \{f_a^\alpha \colon a \in A^{n+1}, \alpha \in I^{n+1}\} \subseteq \Lambda_n (X)$ of observables anchored on points in $A$. 

\begin{definition}
Let $A \subseteq X$ be a finite set, $0 \leq n <\infty$, and $\mu \in \borel{X}$. The {\em $n$-distance transform} $\atr{\mu} \colon \Lambda_n (X,A) \to \borel{\real} $ is defined by
\[
\atr{\mu} (f_a^\alpha) \coloneqq (f_a^\alpha)_\sharp \mu.
\]
\end{definition}

\begin{proposition} \label{P:ainjective}
Let $A \subseteq X$ be a finite set and $\mu, \nu \in \borel{X}$. If the supports of $\mu$ and $\nu$ are contained in $A$ and $T^0_{\mu,A} = T^0_{\nu,A}$, then $\mu=\nu$. A fortiori, for $n>0$, $\atr{\mu}=\atr{\nu}$ implies that $\mu=\nu$. 
\end{proposition}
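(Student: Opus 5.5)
The plan is to read off the masses of individual atoms directly from the distance transforms anchored at points of $A$, using only $\Lambda_0(X,A)$, i.e.\ observables $f_a$ with $a\in A$ (weight $\alpha_0=1$). Write $A=\{a_1,\ldots,a_m\}$ and $\mu=\sum_i \lambda_i\delta_{a_i}$, $\nu=\sum_i \lambda_i'\delta_{a_i}$ with $\lambda_i,\lambda_i'\geq 0$; this representation holds because the supports lie in the finite set $A$. For each $a_i\in A$ the pushforward ${f_{a_i}}_\sharp\mu=\sum_j \lambda_j\delta_{d(a_j,a_i)}$ is a discrete measure on $[0,\infty)$. Its mass at the point $0$ is exactly $\lambda_i$, since $d(a_j,a_i)=0$ iff $j=i$.

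Concretely, I will take $r=\min\{d(a_j,a_k)\colon j\neq k\}>0$, which is positive because $A$ is finite (if $|A|=1$ the claim is trivial). Then $B(a_i,r)\cap A=\{a_i\}$. By the identity $\mu(B(a,r))={f_a}_\sharp\mu((-\infty,r))$ from Section \ref{S:injective}, we get
\[
\lambda_i=\mu(B(a_i,r))={f_{a_i}}_\sharp\mu((-\infty,r))={f_{a_i}}_\sharp\nu((-\infty,r))=\nu(B(a_i,r))=\lambda_i',
\]
where the middle equality uses $T^0_{\mu,A}=T^0_{\nu,A}$. Hence $\mu=\nu$.

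For the \emph{a fortiori} statement I note that $\Lambda_0(X,A)\subseteq\Lambda_n(X,A)$, taking all anchors equal (equivalently $f_a\wedge\cdots\wedge f_a=f_a$ with unit weights). So $\atr{\mu}=\atr{\nu}$ restricts to $T^0_{\mu,A}=T^0_{\nu,A}$, and the first part applies.

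There is no real obstacle. The only points needing care are the justification that a measure with support in the finite set $A$ is a finite combination of Dirac atoms at points of $A$ (the complement of $A$ is open and $\mu$-null by definition of support), and the choice of a radius separating the points of $A$. Alternatively, one could invoke Corollary \ref{C:oinjective}, but that uses observables anchored anywhere in $X$, whereas here anchors are restricted to $A$, so the direct argument above is needed.
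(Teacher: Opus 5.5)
Your proof is correct, and it is more direct than the paper's. The paper says the case $n=0$ is ``identical'' to Corollary~\ref{C:oinjective}, which means running the proof of Theorem~\ref{T:ninjective} with $n=0$. That proof uses:
\begin{itemize}
\item regularity of the measures (Ulam's theorem);
\item compact sets $K$ in the supports;
\item a refinement of the cover of $K$ by $1/m$-balls into pairwise disjoint balls;
\item a limit over the thickenings $K_m$.
\end{itemize}
You bypass all of this. Because the support lies in a finite set, the single separation radius $r=\min_{j\neq k} d(a_j,a_k)$ isolates every atom. Hence $\mu(\{a_i\})=(f_{a_i})_\sharp\mu((-\infty,r))$, or equivalently the atom of $(f_{a_i})_\sharp\mu$ at $0$, with no limits, regularity, or covering-dimension argument needed.

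Your caveat about anchors is well taken. The refinement in the proof of Theorem~\ref{T:ninjective} produces balls centered anywhere in $X$, while $T^0_{\mu,A}$ only supplies pushforwards by $f_a$ with $a\in A$. To make the paper's ``identical'' argument valid, one must note that for $K\subseteq A$ the disjoint refinement can be taken as $B(a,\min\{1/m,r/2\})$ with $a\in K$, so all centers lie in $A$. The paper leaves this implicit; your argument handles it explicitly.

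Your route gives a self-contained, explicit recovery formula that uses only the $|A|$ unit-weight observables $f_a$, $a\in A$. The paper's route keeps the statement uniform with the stratified-injectivity framework. The \emph{a fortiori} step via $\Lambda_0(X,A)\subseteq\Lambda_n(X,A)$ is the same in both.
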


\begin{proof}
The proof for $n=0$ is identical to that of Corollary \ref{C:oinjective}. The second statement follows from the fact that $\Lambda_0 (X,A) \subseteq \Lambda_n(X,A)$ for $n >0$.
\end{proof}

\begin{definition}\label{def:distance_transform_metric}
Let $A \subseteq X$ be a finite set and $n \geq 0$. Define the distance $\rdowp \colon W_p(X) \times W_p(X) \to \real$ by
\[
\rdowp(\mu,\nu) \coloneqq \sup_{f\in \Lambda_n(X,A)} w_p(\atr{\mu}(f), \atr{\nu} (f)) = \sup_{f\in \Lambda_n(X,A)} w_p (f_\sharp \mu, f_\sharp \nu).
\]
\end{definition}

\begin{proposition}
Let $A \subseteq X$ be a finite $\delta$-cover of $(X,d)$, $\delta>0$, and $\mu,\nu \in W_p(X)$. Then, there exist probability measures $\hat{\mu}, \hat{\nu} \in \borel{X}$ such that the supports of both $\hat{\mu}$ and $\hat{\nu}$ are contained in $A$, $\rdowp(\mu,\hat{\mu})<\delta$, $\rdowp(\nu,\hat{\nu}) < \delta$ and $|\rdowp (\mu,\nu) - \rdowp(\hat{\mu},\hat{\nu})| \leq 2 \delta$, for any $n \geq 0$.
\end{proposition}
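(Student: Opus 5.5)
The argument mirrors the proof of Corollary \ref{C:approx}, with $\theta_{p,n}$ replaced by $\rdowp$. I first use Proposition \ref{P:discrete} to obtain $\hat{\mu} = p_\sharp \mu$ and $\hat{\nu} = p_\sharp \nu$, where $p$ is the measurable map of Lemma \ref{L:empiric}. These measures are supported in $A$ and satisfy $w_p(\mu,\hat{\mu})<\delta$ and $w_p(\nu,\hat{\nu})<\delta$. Since both measures come from Proposition \ref{P:discrete} alone, the same pair $\hat{\mu},\hat{\nu}$ works simultaneously for every $n \geq 0$, as the statement requires.

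Next I record that $\rdowp$ is a pseudo-metric on $W_p(X)$ dominated by $w_p$. The pseudo-metric property holds because it is a supremum of the pseudo-metrics $(\mu,\nu)\mapsto w_p(f_\sharp\mu,f_\sharp\nu)$, each inheriting symmetry and the triangle inequality from the metric $w_p$ on $W_p(\real)$. Finiteness and domination come from Lemma \ref{L:bound}, which applies because $\Lambda_n(X,A)\subseteq \Lambda_n(X) \subseteq \obs{X}$. Together these give
\[
\rdowp(\mu,\nu) \leq \theta_{p,n}(\mu,\nu) \leq w_p(\mu,\nu).
\]
In particular $\rdowp(\mu,\hat{\mu}) \leq w_p(\mu,\hat{\mu}) < \delta$, and likewise $\rdowp(\nu,\hat{\nu})<\delta$.

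Finally, two applications of the triangle inequality for $\rdowp$ give
\[
\rdowp(\hat{\mu},\hat{\nu}) \leq \rdowp(\hat{\mu},\mu)+\rdowp(\mu,\nu)+\rdowp(\nu,\hat{\nu}) \leq 2\delta + \rdowp(\mu,\nu),
\]
and symmetrically $\rdowp(\mu,\nu) \leq 2\delta + \rdowp(\hat{\mu},\hat{\nu})$. Combining the two yields $|\rdowp(\mu,\nu)-\rdowp(\hat{\mu},\hat{\nu})| \leq 2\delta$.

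There is no real obstacle here. The only point needing care is to confirm that the pseudo-metric axioms survive the supremum, which reduces to the triangle inequality for $w_p$ on $W_p(\real)$. The bound is in fact strict, but the non-strict bound is all that is claimed.
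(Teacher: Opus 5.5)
Your proposal is correct and follows essentially the paper's argument: the paper takes $\hat{\mu},\hat{\nu}$ from Corollary \ref{C:approx} (which is itself built on Proposition \ref{P:discrete}), bounds $\rdowp \leq \theta_{p,n}$, and concludes with the same two triangle inequalities. Going directly to Proposition \ref{P:discrete} and using $\rdowp \leq \theta_{p,n} \leq w_p$ is the same reasoning with one fewer intermediate step.
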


\begin{proof}
The proof is similar to that of Corollary \ref{C:approx} that guarantees the existence of $\hat{\mu}, \hat{\nu} \in \borel{X}$ supported in $A$ such that $\theta_{p,n}(\mu,\hat{\mu})<\delta$, $\theta_{p,n}(\nu,\hat{\nu}) < \delta$, and $|\theta_{p,n}(\mu,\nu) - \theta_{p,n}(\hat{\mu},\hat{\nu})| \leq 2 \delta$. This implies that $\rdowp (\mu, \hat{\mu}) \leq \theta_{p,n}(\mu,\hat{\mu})<\delta$ and $\rdowp (\nu, \hat{\nu}) \leq \theta_{p,n}(\nu,\hat{\nu})<\delta$. Therefore,
\begin{equation}
\rdowp(\hat{\mu},\hat{\nu}) \leq \rdowp(\hat{\mu},\mu) + \rdowp(\mu,\nu) +\rdowp(\nu, \hat{\nu}) \leq 2\delta + \rdowp(\mu,\nu).
\end{equation}
Similarly, $\rdowp(\mu,\nu) \leq 2 \delta + \rdowp(\hat{\mu},\hat{\nu})$. This proves the proposition.
\end{proof}

\section{Numerical Experiments} \label{S:numexp}

A major benefit of the observable Wasserstein (OW) framework is that it is extremely simple to implement (empirical approximations of) the lower bound $\theta_{p,n}$. In this section, we present several numerical experiments, which explore the behavior of the distance, in comparison to related metrics such as the sliced Wasserstein (SW) and Chamfer distances. The experiments are mostly proof-of-concept in nature, intended to illustrate the properties of observable Wasserstein distances. The code for the experiments is available on our GitHub repository\footnote{https://github.com/trneedham/Observable-Wasserstein-Distance}. 


\subsection{Comparison to Sliced Wasserstein Distance: Gaussian Classification}\label{sec:Gaussian_comparison}

The first synthetic experiment compares the performance of OW distances to SW distances in a simple classification task. Our data is generated as follows. Fixing a dimension $d$, we define covariance matrices $\Sigma_i$, $i \in \{1,2,3\}$ by setting $\Sigma_1 = \mathrm{I}_d$ to be the $d \times d$ identity matrix, $\Sigma_2$ to be the diagonal matrix with diagonal entries $(3,1,1,\ldots,1)$, and $\Sigma_3$ to be the diagonal matrix with diagonal entries $(1,1,\ldots,1,3)$. These covariance matrices are used to distinguish three classes in our constructed dataset. To generate a sample from class $i$, we sample $250$ points from the Gaussian distribution $\mathcal{N}(0,\Sigma_i)$. The dataset for dimension $d$ consists of $10$ samples of this form for each of the three classes. 

The task in this experiment is to use a chosen optimal transport distance to distinguish the classes. First, we use max SW distance with $p=1$, for each number of slices $n \in \{10,20,30,40,50\}$. Slice directions are chosen uniformly at random; we sample directions 10 times for each run and report average results. Next, we use OW distances in the distance transform formulation of Definition~\ref{def:distance_transform_metric}, with $p=1$. This is achieved by sampling $n \in \{10,20,30,40,50\}$ points from a Gaussian $\mathcal{N}(0,5 \cdot \mathrm{I}_{d})$, and using these as the set of anchor points $A$ to measure distances against. In our experiments, we sample anchor points $10$ times for each run, and average the results. We found performance to be relatively stable under choice of $A$. The choice of width of the Gaussian (i.e., size of the covariance matrix) used for sampling anchors---in particular, that it was larger than the widths of the Gaussians used to generate the dataset---was found to be helpful in distinguishing the classes.

For each choice of OT distance, we measure distinguishing performance by 1-nearest neighbor classification score. We compute classification scores for datasets in dimensions $d \in \{2,5,10,25,50,75,100\}$; for each dimension, we run the whole experiment 10 times and report the average classification score for each method. Results of this experiment are reported in Figure~\ref{fig:GaussianClassification}, as well as average compute times for each method. The takeaway  is that the performances of SW and OW distances are similar---classification scores are comparable for each method, across number of slices, with OW gaining an edge over SW as dimension increases, perhaps due to measure concentration effects in the SW projections.

\begin{figure}
    \centering
    \includegraphics[width=1\linewidth]{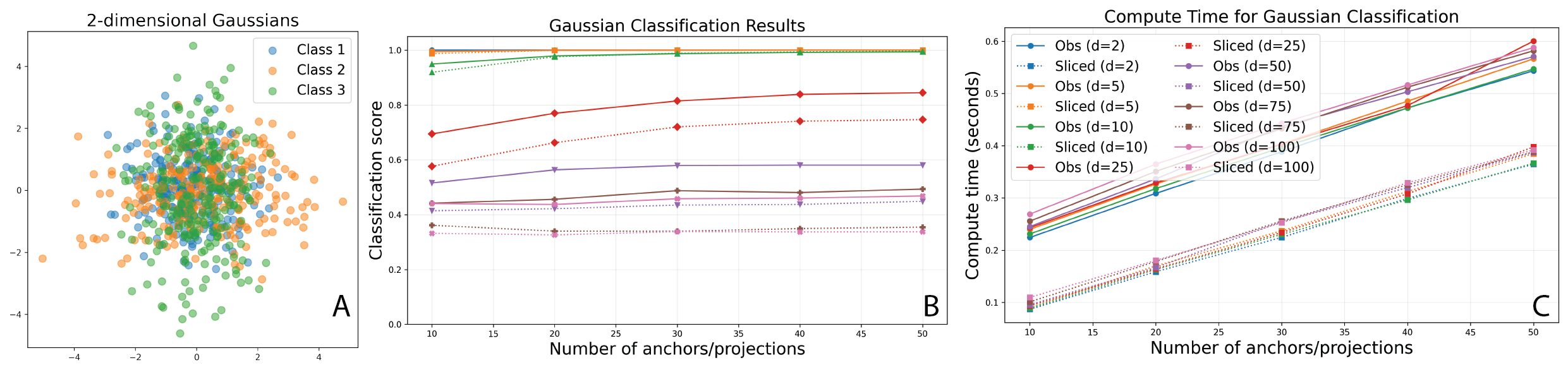}
    \caption{Classifying Gaussian measures with Sliced and observable Wasserstein distances (see Section \ref{sec:Gaussian_comparison}). {\bf A:} Samples from the 2-dimensional Gaussian datasets. {\bf B:} Nearest neighbor classification scores for various optimal transport metrics, plotted against number of projections or anchor points used in their computations. The legend from {\bf C} applies to the plot in {\bf B}. {\bf C:} Compute times for all methods used in the Gaussian classification experiment.}
    \label{fig:GaussianClassification}
\end{figure}

\subsection{Comparison to Wasserstein Distances: Distributions on Graphs}\label{sec:graph_classification}

This experiment treats distributions on non-Euclidean spaces, so that the traditional sliced Wasserstein distance is not applicable, and we therefore compare to classical Wasserstein distance. This is approached through another simple classification task for distributions on graphs. To construct our dataset, we first draw a random graph $G$ via the \texttt{networkx} implementation of the random geometric graph model~\cite{hagberg2008exploring,penrose2003random}: we sample $n$ nodes uniformly at random from a unit square $[0,1] \times [0,1]$, and then connect any pair of nodes at distance less than a given radius $r$. In our experiments, we set $r = \sqrt{\log (n)/n}$, which produces relatively sparse graphs that are still likely to be connected; disconnected graphs are rejected and we draw again until a connected graph is constructed. Once an admissible graph $G$ has been generated, we define a random distribution from one of three classes. The class is determined by a \emph{location} parameter---the unit square is divided into 9 regions by a uniform $3 \times 3$ grid, and the classes correspond to the \emph{top-left}, \emph{middle}, and \emph{bottom-right} regions of the graph. Given a choice of location, a node is drawn from the corresponding grid square uniformly at random. We then diffuse heat from this node via the combinatorial Laplacian, for time $t$; in our experiments, we used $t = 0.1 \cdot n$, as this tends to yield distributions which, qualitatively, are neither highly concentrated nor highly diffuse. Next, we add random noise to the resulting heat function, depending on a \emph{noise level} parameter---for noise level $\beta$, random noise of scale $\beta$ times the maximum of the current heat function is added. Finally the random distribution is obtained by truncating negative values and normalizing. See Figure \ref{fig:GraphClassification}A for examples of these distributions.

The task in this experiment is to distinguish classes of distributions over a given graph via various OT distances. To treat the graph $G$ as a metric space, we use shortest path distance, with unweighted edges. Here (as sliced Wasserstein is no longer applicable), we use Wasserstein distance as a baseline. Specifically, we use the \emph{Earth Mover's Distance} implementation of Wasserstein distance (i.e., with $p=1$) from the \texttt{python optimal transport} package. We also implement the distance transform version of observable Wasserstein distance, with $p=1$, by taking our set $A$ to be a random sample of nodes from the graph; the number of nodes is taken to be a percentage of the total nodes, from the set $\{5\%, 10\%, 15\%\}$. 

Given one of the OT distances described above, nearest neighbor classification scores for each dataset are computed, as in Section 6.1 of the main document. A single run of an experiment consists of a choice of random graph with $n\in \{300,400,500,600,700\}$ nodes, a choice of noise level $\beta \in \{0.5,1.0,1.5,2.0,2.5,3.0\}$, from which a dataset of 10 distributions from each class is generated on $G$, and classification scores are computed. For each choice of $n$ and $\beta$, 10 runs of the experiment are performed, and average results are reported.

The results of this experiment are shown in Figure \ref{fig:GraphClassification}. We note that the observable Wasserstein distances outperform Wasserstein distance in classification score. In addition, the computation time for Wasserstein distances increases much more rapidly than those of the observable Wasserstein distances as the number of nodes increases. One potential explanation for this performance is that the observable Wasserstein distance could be more robust to noise than the classical Wasserstein distance. This conjecture is supported by the fact that improved robustness of max sliced Wasserstein distance has been theoretically demonstrated in~\cite[Theorem 2]{nietert2022statistical}, where it is shown to enjoy dimension-free risk bounds in an outlier noise contamination model, avoiding the dimension factors that appear in classical Wasserstein distance~\cite{nietert2022outlier}.

\begin{figure}
    \centering
    \includegraphics[width=1\linewidth]{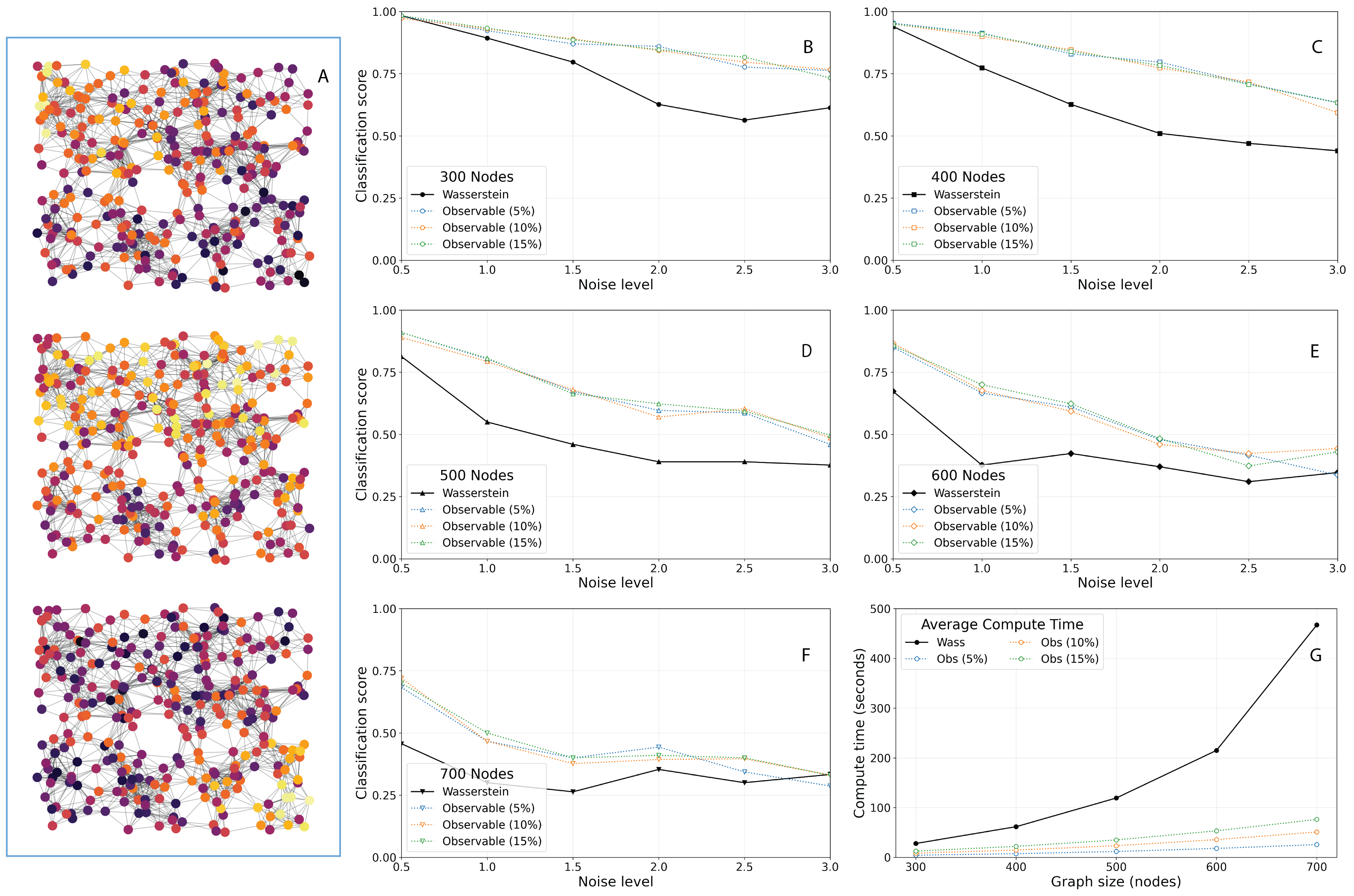}
    \caption{Classifying distributions on graphs with Wasserstein and observable Wasserstein distances (see Section \ref{sec:graph_classification}). {\bf A:} Examples of distributions from three classes (top-left, middle, bottom-right, respectively) on a fixed graph $G$, with $n=300$ nodes, noise level $\beta = 1.5$. {\bf B--F:} Classification scores for graphs with $n \in \{300,400,500,600,700\}$ nodes, respectively. {\bf G:} Average compute times of each method (across all noise levels) for graphs of various sizes.}
    \label{fig:GraphClassification}
\end{figure}

\subsection{Dependence on Type of Observables: Distributions on Spheres}\label{sec:distributions_on_spheres}

The experiments in this subsection are designed to determine the effect of using observables from the set $\Lambda_n(X)$, versus simple distance-to-a-point functions (as were used exclusively in the experiments above). Here, we fix the ambient metric space $(X,d)$ to be a $d$-dimensional unit sphere, endowed with geodesic distance. Given $d$ and a number of samples $m \in \{100,200\}$, we generate two discrete distributions $\mu$ and $\nu$ by uniformly sampling $m$ points from the $d$-sphere. Our goal is to understand how empirical estimates of the OW distance between these distributions compare to the Wasserstein $1$-distance under various choices of observables. 

We choose a maximum number of functions to include in an observable $n_f \in \{1,3,5,7,9\}$, as well as a number of observables $n_o \in \{40,80,120,160,200\}$. We sample $1000$ points from the unit sphere as a collection of potential anchor points. To estimate the OW $1$-distance, we  randomly choose $n_o$ collections of anchor points of the form $a = \{a_1,\ldots,a_{n_f}\}$. For each such collection, we include observable functions of the form $f_{a_{i_1}} \wedge f_{a_{i_2}} \wedge \cdots \wedge f_{a_{i_k}}$, for all subsets  $\{a_{i_1},\ldots,a_{i_k}\} \subset a$. The OW distance is then estimated via this collection of observables.

For each choice of dimension $d$, number of samples $m$, number of functions $n_f$ and number of observables $n_o$, we calculate the \emph{relative error} $
(\mbox{W} - \mbox{ OW})/\mbox{W}$, 
where ``W'' is Wasserstein $1$-distance between $\mu$ and $\nu$ and ``OW'' is the OW distance estimate described above. This is repeated 10 times (redrawing $\mu$ and $\nu$ each time), and the average result is recorded. 

The results of the experiments are shown in Figure \ref{fig:sphereExperiment}. The figure shows the trade-off between relative error for the various parameters with compute time per instance of the distance. As either number of observables $n_o$ or number of functions $n_f$ increases, the relative error decreases, as these increases yield better estimates of OW distance.
We observe that increasing the number of functions $n_f$ yields diminishing returns in lowering the relative error, while incurring significant computational cost, once it is increased from $n_f = 7$ to $n_f = 9$.

\begin{figure}
    \centering
    \includegraphics[width=0.98\linewidth]{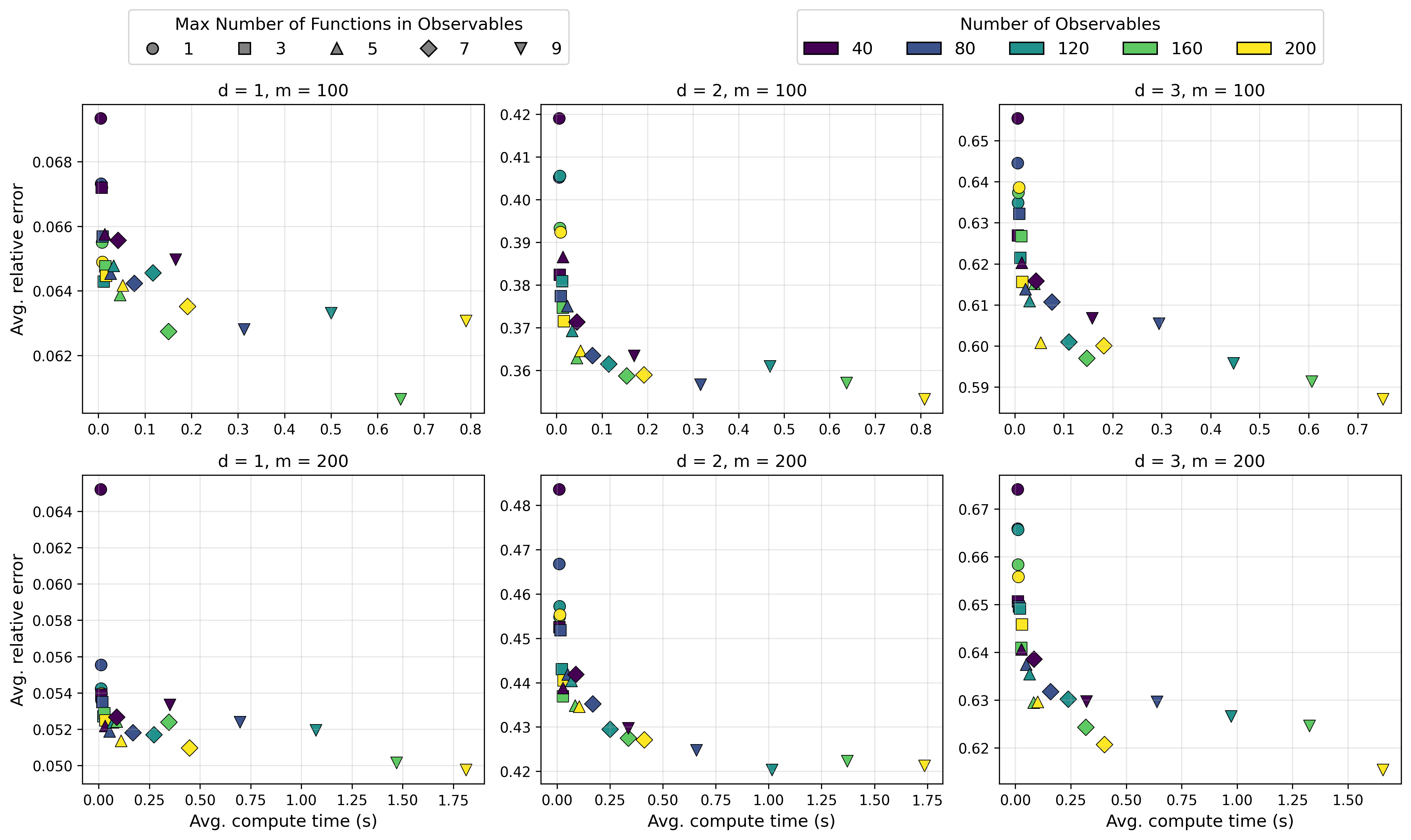}
    \caption{Relative errors of estimated observable Wasserstein distances compared to compute time (see Section \ref{sec:distributions_on_spheres}). The legend applies to all plots, with point shape corresponding to number of functions in observables ($n_f$ in the text) and color corresponding to number of observables ($n_o$ in the text). Each plot gives results for a combination of sphere dimension $d$ and number of points in the random distribution $m$.}
    \label{fig:sphereExperiment}
\end{figure}

\subsection{Classification Performance on Real Data}\label{sec:modelnet}

In this experiment, we compare the classification performance of scalable point cloud metrics on the \texttt{ModelNet10} dataset~\cite{wu20153d}, which consists of 10 classes of meshes of everyday objects such as bathtubs, beds and chairs. In the experiment, we use a small version of the dataset, consisting of 1308 meshes total, roughly balanced across the 10 classes. We treat each mesh as a point cloud in $\mathbb{R}^3$ by sampling 1024 points from each surface, then normalize so that each point cloud lies in a unit ball. The dataset is divided into a training set of 400 point clouds and a testing set of 908 point clouds. We then create noisy versions of this dataset by adding $1024 \cdot \eta$ random Gaussian (mean zero, standard deviation $\sigma = 2$) points to each point cloud, with $\eta \in \{0,0.2,0.4,0.6,0.8,1.0\}$. Examples of point cloud data in the experiment are shown in Figure \ref{fig:modelnet}A.

For each noise level $\eta$, we quantify the classification power of a given  point cloud metric $d$ via 1- and 5-nearest neighbor classification scores (using the distance from a test point cloud to its nearest training point clouds). The metrics we consider are the max-SW distance (with 100 slice directions sampled from the uniform distribution on the sphere), the OW distance using only distance-to-a-point functions (with 100 anchor points sampled randomly from a mean zero, standard deviation $\sigma = 2$, Gaussian distribution), and the OW distance using observables of the form $f_{a_1} \wedge \cdots \wedge f_{a_5}$ (100 such observables, where anchor points are randomly sampled, as above). Note that we do not use observables with all possible subsets of anchor points, as in the previous subsection, for the sake of keeping the compute time feasible. As an additional baseline, we use the \emph{Chamfer distance}~\cite{barrow1977parametric,borgefors1988hierarchical}; the Chamfer distance between point clouds $P$ and $Q$ is given by
\begin{equation}\label{eqn:chamfer}
\sum_{p \in P} \min_{q \in Q} \|p-q\|^2 + \sum_{q \in Q} \min_{p \in P} \|p-q\|^2.
\end{equation}
This baseline is included, due to its ubiquity in machine learning on point clouds~\cite{fan2017point,yuan2018pcn,wu2021density}. 

Results of the experiment are reported in Figure \ref{fig:modelnet}. We observe that, once noise is added to the dataset, the OW distances provide the best classification scores across all metrics. Moreover, its compute time scales similarly to SW distance, and more tractably than Chamfer distance---we note that we are using a straightforward implementation of Chamfer distance, so this scaling issue could potentially be mitigated by applying a more sophisticated algorithm~\cite{bakshi2023near}.

\begin{figure}
\includegraphics[width=0.95\linewidth]{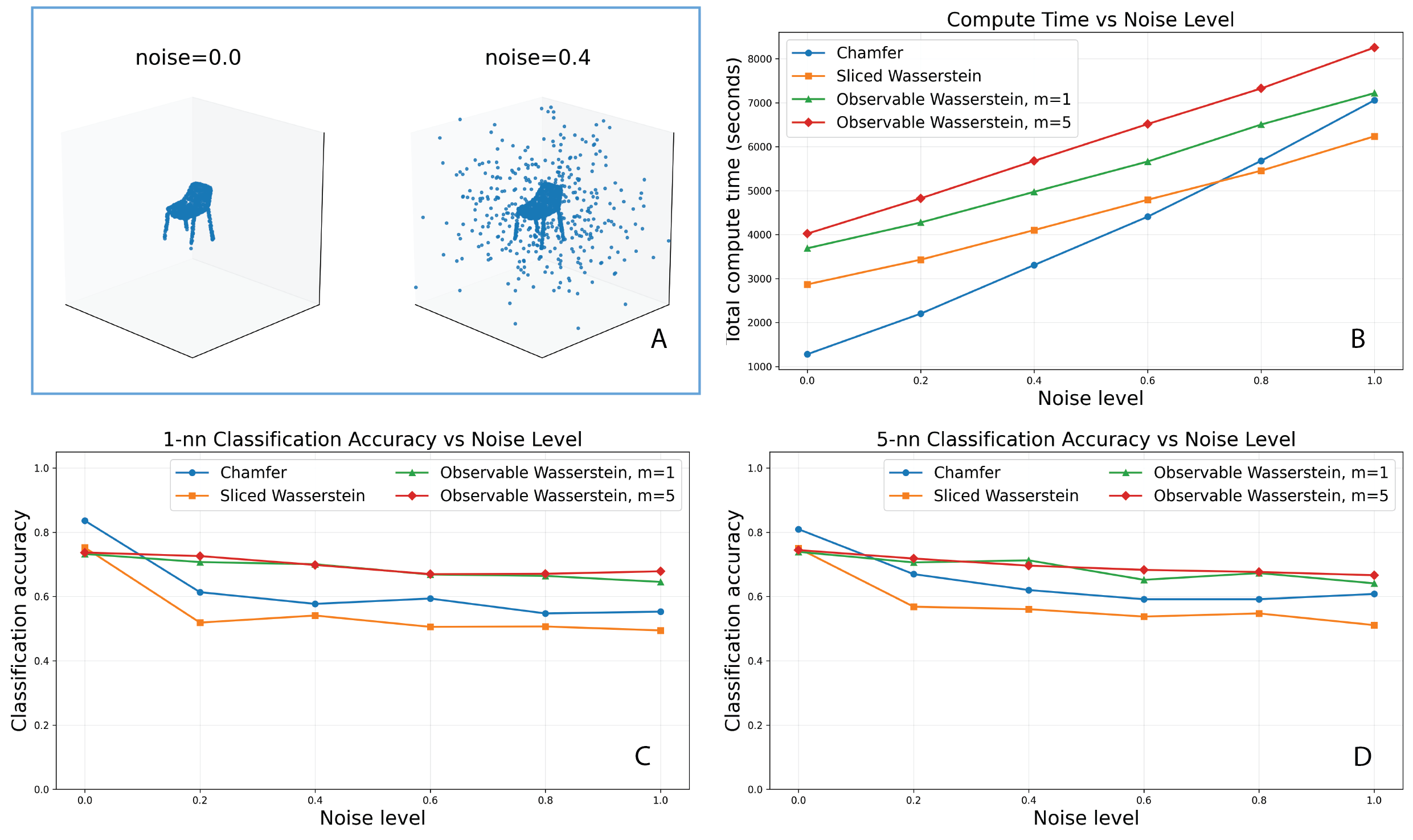}
\caption{Nearest neighbor classification on \texttt{ModelNet10} via scalable point cloud distances (see Section \ref{sec:modelnet}). {\bf A:} An example clean and noisy point cloud from the \texttt{ModelNet10} dataset. {\bf B:} Compute times for the full experiment for each metric. {\bf C:} 1-Nearest Neighbor classification scores for each distance across noise levels. {\bf D:} 5-Nearest Neighbor classification scores.}\label{fig:modelnet}
\end{figure}

\subsection{Application to Deep Learning: Point Cloud Autoencoder Loss}\label{sec:autoencoder}
In this proof-of-concept experiment, we implement a simple point cloud autoencoder, using (a variant of) OW distance in the loss function. When learning representations of point clouds, it is common in the literature to use a loss based on the Chamfer distance~\cite{hermosilla2019total,duan20193d,deng2018ppf}. Previous works have also utilized 1-Wasserstein~\cite{achlioptas2018learning} and SW distances~\cite{nguyen2021point}, which have been shown to have better performance than the Chamfer distance in classification and registration tasks. The framework proposed here also compares to the Chamfer distance, while a more thorough implementation and comparison to other variants is left as follow-up work.

The architecture of our autoencoder is summarized as follows:

\begin{itemize}
    \item \textbf{Input:} The model takes as input a point cloud $X \subset \mathbb{R}^2$ of fixed size $N$ (in our experiments, $N=128$). 

    \item \textbf{Encoder:} The encoder is a PointNet-style~\cite{qi2017pointnet} map
    \[
    E \colon (\mathbb{R}^2)^N \to \mathbb{R}^m,
    \]
    where \(m\) is the latent dimension, which was fixed in our experiments as $m=128$. First, a common multilayer perceptron (MLP), with ReLU activation, is applied to each point, with layer dimensions 
    \[
    2 \to 64 \to 128 \to 256 \to 512.
    \]
    Next, max pooling is applied, to enforce permutation invariance. Finally another MLP is applied, with layer dimensions 
    \[
    512 \to 256 \to m.
    \]
    The resulting vector in $\mathbb{R}^m$ is the latent representation of the point cloud.

    \item \textbf{Decoder:} The decoder is an MLP
    \[
    D \colon \mathbb{R}^m \to (\mathbb{R}^2)^N
    \]
    with layer dimensions
    \[
    m \to 256 \to 512 \to 1024 \to 2N.
    \]

    \item \textbf{Loss Function.} Let $\omega$ denote the weights for a particular model, with associated encoder $E_\omega$ and decoder $D_\omega$. For a training dataset $\{X_i\}_{i=1}^M$, set $X^\omega_i = D_\omega \circ E_\omega(X_i)$. The loss function is based on a convex combination of Chamfer and  observable Wasserstein distances:
    \[
    \mathcal{L}(\omega) = \sum_{i=1}^M (1-\alpha) L_{Ch}(X_i,X_i^\omega) + \alpha L_{Ob}(X_i,X_i^\omega),
    \]
    where $\alpha \in [0,1]$. The terms in the sum are given by 
    \begin{itemize}
        \item $L_{Ch}(P,Q)$ is squared Chamfer distance between $P$ and $Q$.
        \item $L_{Ob}(P,Q)$ is the square of an empirical version of an $L^2$-style observable \\ Wasserstein distance. We uniformly sample $k$ anchor points $a_i \in [0,1] \times [0,1]$ (with $k=64$ in our experiments) and compute 
        \[
        L_{Ob}(P,Q) = \sum_{j=1}^k W_1((f_{a_j})_\# P, (f_{a_j})_\# Q)^2,
        \]
        where $f_{a_j}(x) = \|a_j - x\|$. We abuse notation and consider $P$ and $Q$ as empirical distributions. While the majority of the paper focused on $L^\infty$-type formulations of observable Wasserstein distance, we opted for the $L^2$ version here for improved differentiability properties when  training the models. 
    \end{itemize}
\end{itemize}

We apply our autoencoder pipeline to the \textsc{MNIST} dataset of handwritten digits, which we convert to point-cloud data as follows. Given a digit (a $28 \times 28$ grayscale image), we choose a fixed intensity threshold (we used threshold $0.25$ in our experiments) and retain only those pixels whose grayscale value is high enough, each of which is treated as a point in the point cloud, considered as a subset of the unit square $[0,1] \times [0,1]$. Next, we normalize the resulting point cloud to have a fixed number of points $N$ (we used $N=128$ points in our experiments) by either randomly subsampling or randomly duplicating points.

Additionally, we generate a noisy version of the MNIST dataset as follows. For a fixed noise level $\rho > 0$, we add $\lfloor \rho \cdot N \rfloor$ noise points, uniformly sampled from $[0,1] \times [0,1]$, then subsample the result to once again have $N$ points. In this setting the autoencoders are trained as \emph{de-noisers}: the input is a noisy point cloud, which is encoded and decoded, with the output compared against the ground truth clean point cloud via the loss function.

In our experiments, we trained autoencoders with loss function weights $\alpha \in   \{0,0.25,0.5,$ $0.75,1\}$. In each case, training was run for 50 epochs on the full MNIST training data (50,000 images) using batches of size 128. This was performed on the clean MNIST data as well as the noisy MNIST data in the context of de-noising. 

Figure~\ref{fig:autoencoder_results} shows some example reconstructions for the pure Chamfer, pure observable Wasserstein and mixed ($\alpha = 0.5$) loss functions. We observe that Chamfer decodings yield sharper renderings of the point cloud, but exhibit uneven point density. In contrast, observable Wasserstein decodings, while introducing some outlier noise, are generally more uniform in density. In the noisy regime, the Observable loss is able to recover digit features which are destroyed by the Chamfer loss. Averages of digit classes in the testing datasets for the Chamfer and observable Wasserstein models are shown in Figure~\ref{fig:LatentAverages}. The averages are computed by embedding the datasets in the latent space, computing Euclidean averages of the latent codes, then decoding. Here, we once again observe a tradeoff between sharpness and uneven density in the decodings between the two models.

\begin{figure}
    \centering
    \includegraphics[width=1\linewidth]{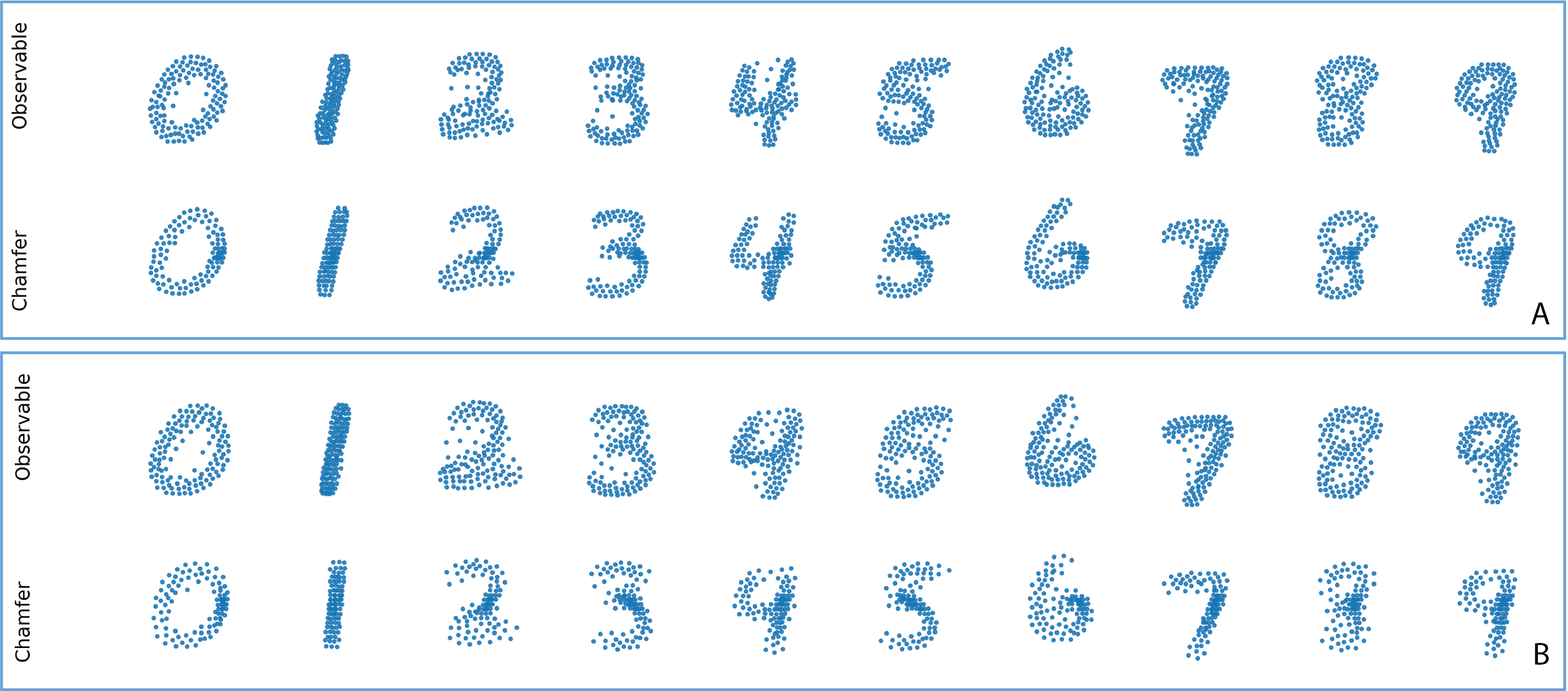}
    \caption{Averages of MNIST test digit classes from latent space codes from the autoencoders using pure Chamfer and pure observable Wasserstein losses, respectively. {\bf A:} Averages from the clean MNIST dataset. {\bf B:} Averages from the noisy MNIST model.}
    \label{fig:LatentAverages}
\end{figure}

A more quantitative summary of performance is presented in Table \ref{tab:mnist-latent-logreg}. Here we run a logistic regression classifier on the embeddings of the datasets into the latent space. The classifier is trained on the embedding of the training set in each case, and evaluated on the embedding of the testing set (consisting of 10,000 digits). We observe that classification accuracy is highest when pure Observable loss is used. We emphasize here that our deep learning pipeline was not trained as a classifier, and these classification results are only meant to quantify how well classes are separated in the latent space.

\begin{figure}
    \centering
    \includegraphics[width=1\linewidth]{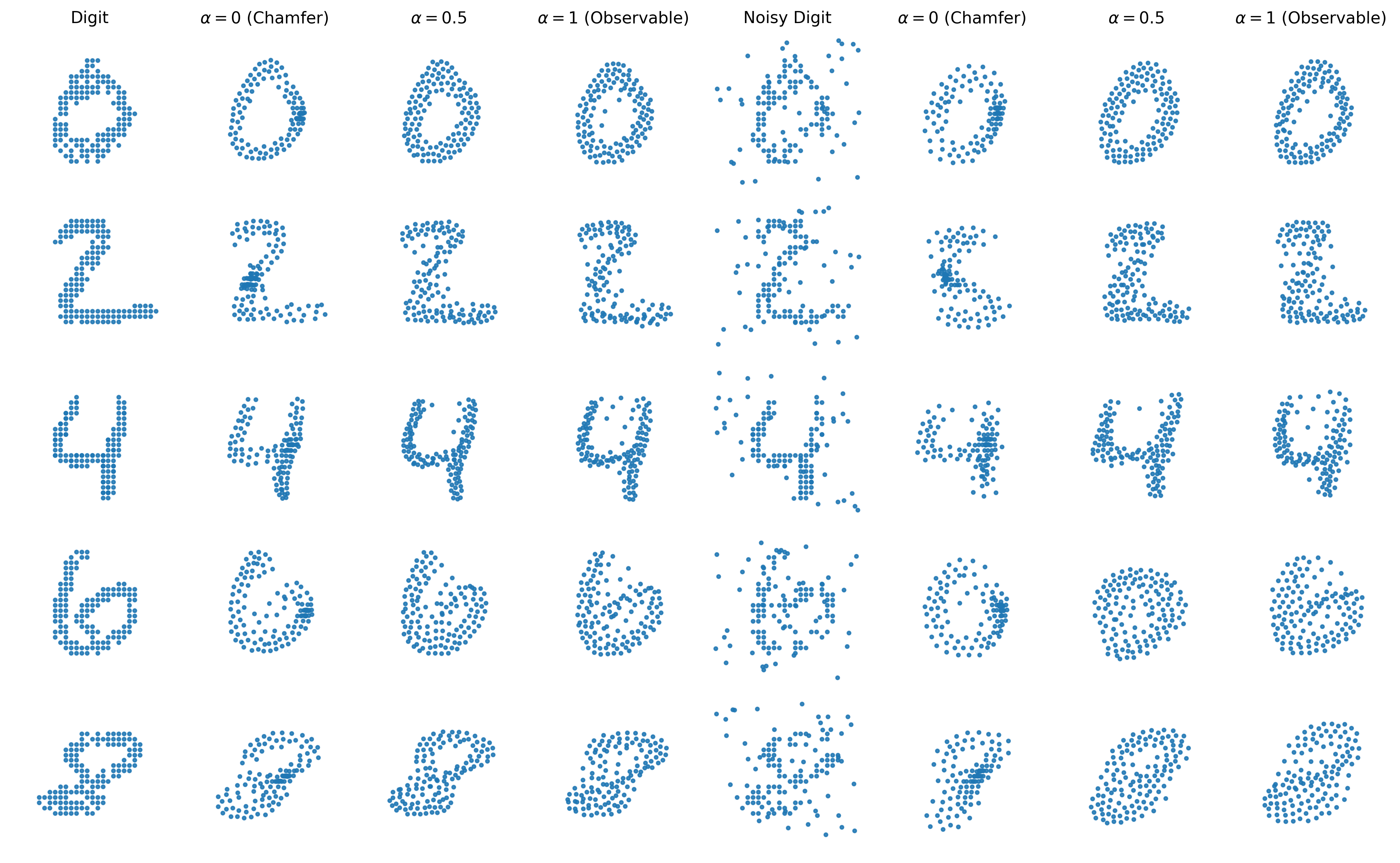}
    \caption{Autoencoder results on the MNIST dataset. The left column shows some example digits from the testing set. Subsequent columns show decoding results from the autoencoders with $\alpha = 0$ (corresponding to pure Chamfer loss), $\alpha = 0.5$, and $\alpha = 1$ (corresponding to pure observable Wasserstein loss). Column 5 shows examples from the noisy digit testing set, and subsequent columns show autoencoder reconstructions.}
    \label{fig:autoencoder_results}
\end{figure}

\begin{table}[t]
\centering
\begin{tabular}{lccccc}
\hline
Dataset
& $\alpha=0$ (Chamfer)
& $\alpha=0.25$
& $\alpha=0.5$
& $\alpha=0.75$
& $\alpha=1$ (Observable) \\
\hline
MNIST
& 87.31
& 88.27
& 88.78
& 89.64
& \textbf{90.92} \\
Noisy MNIST
& 72.36
& 73.74
& 77.95
& 78.56
& \textbf{80.67} \\
\hline
\end{tabular}
\caption{Latent-space logistic regression accuracy for MNIST point-cloud autoencoders.}
\label{tab:mnist-latent-logreg}
\end{table}

\section{Summary and Discussion} \label{S:discussion}

In this paper, we introduced the observable Wasserstein distance, and several computationally tractable lower bounds. These lower bounds were shown to distinguish measures belonging to certain natural classes characterized by the dimension of their supports, on which they define true metrics. We demonstrated the efficiency and favorable performance of our framework in several proof-of-concept numerical experiments. 

The ideas presented here raise several questions that we plan to address in future work:

\begin{itemize}
    \item Proposition \ref{prop:equivalence_p_equals_1} shows that $\theta_1 = w_1$. For $p \neq 1$, one might ask whether $\theta_p$ and $w_p$ are bi-Lipschitz equivalent,  $w_p \leq L \theta_p$,  and to understand the dependence of the Lipschitz constant $L$ on the geometry of $X$. This problem parallels existing work on the analogous question for sliced Wasserstein distances~\cite{carlier2025sharp,bayraktar2021strong,bonnotte2013unidimensional}.
    \item The sampling convergence and robustness properties of sliced Wasserstein distance are well understood (cf.~\cite{nietert2022statistical,lin2021projection,manole2022minimax}). It would be interesting to extend these results to the setting of observable Wasserstein distances.
    \item The computationally tractable approach to observable Wasserstein distance is through approximations of $\theta_{p,n}$, which involves restricting observables to a class derived from distance-to-a-point functions. This is a natural class of Lipschitz functions to consider, but other sets of observables can be potentially more expressive.
    \item The numerical experiments in this paper were mainly proof-of-concept in nature. It remains an open task to develop a fully fleshed out numerical framework, and to run more in-depth experiments, involving a wider variety of ambient metric spaces. Of special interest is the development of the deep learning angle, for example, to leverage the flexibility of the framework by learning useful observables as part of training.  
\end{itemize}

\section*{Declarations}

\noindent {\bf Conflict of Interest } The authors declared that they have no conflict of interest. \\

\noindent {\bf Funding statement} 
Part of this work was completed while WM was a visitor at Universidade Federal de S\~{a}o Carlos (UFSCar), Brazil, supported in part by FAPESP grant 2022/16455-6. TN was supported by NSF grants DMS 2324962 and CIF 2526630.  ES was supported by FAPESP, Brazil, grant 2022/16455-6. LM was supported by FAPESP, Brazil, grant 2024/14246-6.\\

\noindent {\bf Author Contributions} These authors contributed equally to this work.\\

\noindent {\bf Acknowledgments} The authors thank Ece Karacam for contributing some illustrations to the paper.  The authors acknowledge the use of ChatGPT~5.5 for assistance with coding aspects of the numerical experiments.


\bibliography{owbiblioSpringer}

\end{document}